\let\amslrcorner\lrcorner
\let\lrcorner\amslrcorner
\theoremstyle{plain}
\newtheorem{theorem}{Theorem}[section]
\newtheorem{lemma}{Lemma}[section]
\newtheorem{proposition}{Proposition}[section]
\newtheorem*{claim*}{Claim}
\newtheorem*{lemma*}{Lemma}
\newtheorem*{theorem*}{Theorem}
\newtheorem{maintheorem}{Theorem}
\theoremstyle{definition}
\newtheorem{definition}{Definition}[section]
\theoremstyle{remark}
\newtheorem{remark}{Remark}
\DeclareMathOperator{\Hessian}{Hess}
\DeclareMathOperator{\Domain}{Dom}
\DeclareMathOperator{\Range}{Ran}
\renewcommand{\Im}{\operatorname{Im}}
\renewcommand{\Re}{\operatorname{Re}}
\renewcommand{\i}{\operatorname{\sqrt{-1}}}
\newcommand{\PSH}{\mathrm{PSH}}
\newcommand{\extp}{\@ifnextchar^\@extp{\@extp^{\,}}}
\def\@extp^#1{\mathop{\bigwedge\nolimits^{\!#1}}}
\NewDocumentCommand{\inn}{mo}{%
	\langle #1\rangle
	\IfValueT{#2}{^{}_{\mspace{-3mu}#2}}%
}
\NewDocumentCommand{\dinn}{mo}{%
	\llangle #1\rrangle
	\IfValueT{#2}{^{}_{\mspace{-3mu}#2}}%
}
\NewDocumentCommand{\xinn}{>{\SplitArgument{1}{,}}mo}{%
	\doinnerproduct#1
	\IfValueT{#2}{^{}_{\mspace{-3mu}#2}}%
}
\NewDocumentCommand{\doinnerproduct}{mm}{%
	\langle #1\mid #2\rangle 
}
\newcommand*\bigcdot{\mathpalette\bigcdot@{1}}
\newcommand*\bigcdot@[2]{\mathbin{\vcenter{\hbox{\scalebox{#2}{$\m@th#1\bullet$}}}}}
\begin{document}
	
	\title{On the H\"{o}rmander's estimate}
	
	\author{Bingyuan Liu}

	\date{\today}

	\maketitle	
	
	\begin{abstract}
	The motivation of the note is to obtain a H\"{o}rmander-type $L^2$ estimate for $\bar\partial$ equation. The feature of the new estimate is that the constant in the estimate is independent of the weight function. Moreover, our estimate can be used for non-plurisubharmonic weight function.
	\end{abstract}
	
	\section{Introduction}\label{introduction}
 
	The $L^2$-method of functional analysis has been well-applied to algebraic geometry settings for decades. It can tell if the solution of a Cauchy--Riemann equation exists. It also provides an estimate of the solution in terms of the given data. With the solution of the Cauchy--Riemann equation, one can easily construct a holomorphic function. Consequently, cohomology can be studied with the help of the $L^2$-method.

    Depending on the ambient spaces, there are two common approaches to studying the $\bar\partial$ equation: either treat the space as a manifold with a complete metric or treat it as a bounded domain in Euclidean spaces (in which case the metric on the domain will be induced from the Euclidean metric and is incomplete). One can examine the $L^2$ estimate using either of the two approaches, even for the same domain. The $L^2$ space will be smaller with complete metric than with incomplete metric. After all, the $L^2$ integrable functions/forms for the complete metric need to ``vanish'' when approaching boundary while the case of incomplete metrics needs not. As a result, the case of incomplete metrics still has yet to be well-understood. The $\bar\partial$-Neumann problem, which is related to the $\bar\partial$-problem, shares the same lack of clarity. Indeed, the necessary and sufficient conditions of the global regularity of the $\bar\partial$-Neumann problem remain unknown. See Boas--Straube \cite{BS90}, \cite{BS91}, \cite{BS91b}, \cite{BS93}, Harrington \cite{Ha11}, Harrington--Liu \cite{HL19}, Kohn \cite{Ko63}, \cite{Ko64}, Pinton--Zampiere \cite{PZ14}, and  Liu--Straube \cite{LS22} for sufficient conditions. See Huang--Li \cite{HL16} for mixed boundary conditions as well. The $\bar\partial$-Neumann problem is known to be closely related to the theory of the Bergman kernel. See Hsiao--Savale \cite{HS22}.

    The $L^2$ estimates of the $\bar\partial$ operator has wide applications in algebraic geometry. This type of estimate was initially introduced by Andreotti--Vesentini in their works \cite{AV61} and \cite{AV65}. It has been subsequently proven using different methods, by H\"{o}rmander \cite{Ho65}. More recently, Donnelly--Fefferman, Ohsawa--Takegoshi, as well as Berndtsson--Charpentier, have derived alternative forms of this estimate.

\begin{theorem}[Donnelly--Fefferman \cite{DF83}, see also Chen \cite{Ch22}]\label{1.1}
    Let $\Omega$ be a bounded pseudoconvex domain in $\mathbb{C}^n$ and $\phi\in\PSH(\Omega)$. Assume that $\phi$ is strictly plurisubharmonic so that for \[r\i\partial\bar\partial\phi\geq \i\partial\phi\wedge\bar\partial\phi\] for some $r>0$. Then for the equation $\bar\partial u=v$, where $\bar\partial v=0$ and $v\in L^2(\Omega, \phi)$, there exists a $u\in L^2(\Omega,\phi)$ so that \[\int_\Omega|u|^2e^{-\phi}\,dV\leq C_0r\int_\Omega|v|^2_{\i\partial\bar\partial\phi}e^{-\phi}\,dV.\] The $C_0$ is a constant.
\end{theorem}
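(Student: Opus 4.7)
My plan is to derive this as a consequence of H\"{o}rmander's classical $L^2$ theorem applied to a modified weight. The algebraic heart of the argument is the observation that the hypothesis $r\,i\partial\bar\partial\phi\geq i\partial\phi\wedge\bar\partial\phi$ is precisely the condition for the auxiliary function $\chi:=-e^{-\phi/r}$ to be plurisubharmonic: a direct computation gives
\[
	i\partial\bar\partial\chi = \tfrac{1}{r}\,e^{-\phi/r}\Bigl(i\partial\bar\partial\phi - \tfrac{1}{r}\,i\partial\phi\wedge\bar\partial\phi\Bigr),
\]
which is nonnegative exactly when the hypothesis holds. Thus the quantitative strict plurisubharmonicity of $\phi$ is encoded in the single \PSH{} function $\chi$, which is what Donnelly--Fefferman-type arguments exploit.

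I would first reduce to the case of smooth, strictly plurisubharmonic $\phi$ on a smoothly bounded strictly pseudoconvex $\Omega$, via mollification on an exhaustion $\Omega_j\Subset\Omega$, keeping constants independent of the regularization parameters so that the limit can be taken by weak $L^2$ compactness. Next I would introduce the perturbed weight $\tilde\phi:=\phi+\lambda\chi$ for a constant $\lambda>0$; both summands are \PSH{}, so $\tilde\phi$ is as well and $i\partial\bar\partial\tilde\phi\geq i\partial\bar\partial\phi$, which yields the pointwise comparison of dual norms $|v|^2_{i\partial\bar\partial\tilde\phi}\leq|v|^2_{i\partial\bar\partial\phi}$. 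H\"{o}rmander's Nakano-form $L^2$ estimate with weight $\tilde\phi$ then produces $u$ with $\bar\partial u = v$ and
\[
	\int_\Omega |u|^2 e^{-\tilde\phi}\,dV \leq \int_\Omega |v|^2_{i\partial\bar\partial\tilde\phi}\,e^{-\tilde\phi}\,dV.
\]
Writing $e^{-\tilde\phi}=e^{-\phi}\cdot e^{-\lambda\chi}$ and using the boundedness of $\chi$ (after a shift ensuring $\phi\geq 0$, whence $-1\leq\chi\leq 0$) converts this into the target estimate.

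Alternatively, and perhaps more in line with how the constant $C_0 r$ appears naturally, I would work directly with a twisted Bochner--Kodaira--Nakano inequality of the form
\[
	\int(\tau+B)|\bar\partial^*_\phi\alpha|^2 e^{-\phi}\,dV + \int\tau|\bar\partial\alpha|^2 e^{-\phi}\,dV \geq \int\inn{[\tau\,i\partial\bar\partial\phi - i\partial\bar\partial\tau - B^{-1}i\partial\tau\wedge\bar\partial\tau,\Lambda]\alpha,\alpha}\,e^{-\phi}\,dV,
\]
valid for smooth $\tau,B>0$ and any smooth compactly supported $(0,1)$-form $\alpha$ (obtained by multiplying the standard BKN identity through by $\tau$ and absorbing the cross term via Cauchy--Schwarz with parameter $B$). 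Choosing $\tau$ as a convenient function of $\phi$ (for instance $\tau\equiv 1$ paired with $B$ comparable to $r$, or $\tau=\chi+C$), the hypothesis is used to control the $B^{-1}i\partial\tau\wedge\bar\partial\tau$ correction, and the functional-analytic duality criterion for $\bar\partial$-solvability produces the desired $u$, with the factor $r$ emerging directly from the absorption constants in Cauchy--Schwarz.

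The main technical obstacle I anticipate lies in the weight comparison: $\chi=-e^{-\phi/r}$ is only bounded above by $0$ and can tend to $-\infty$ wherever $\phi\to-\infty$, so if $\phi$ is not bounded below on $\Omega$ one must either truncate $\chi$ (complicating $i\partial\bar\partial\chi$) or commit to the twisted BKN route throughout. A secondary delicate point is verifying that constants in the regularized estimates are uniform in the exhaustion and mollification parameters, which is standard but requires careful bookkeeping given the nonsmoothness of $\phi$ allowed by the hypothesis.
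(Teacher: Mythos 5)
First, note that the paper does not actually prove Theorem \ref{1.1}: it is quoted from Donnelly--Fefferman and Chen as background, so there is no in-paper argument to compare against. The closest relative inside the paper is the two-weight ``basic equation'' machinery of Sections \ref{sectionof2eq}--\ref{thelastsection}, which belongs to the same family as your second (twisted Bochner--Kodaira--Nakano) route. Your identification of the key algebraic fact --- that $\chi=-e^{-\phi/r}$ is plurisubharmonic precisely under the hypothesis $r\i\partial\bar\partial\phi\geq\i\partial\phi\wedge\bar\partial\phi$ --- is correct and is indeed the engine of every known proof.

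The genuine gap is in your first route. The moment you pass from $\i\partial\bar\partial\tilde\phi\geq\i\partial\bar\partial\phi$ to the dual-norm comparison $|v|^2_{\i\partial\bar\partial\tilde\phi}\leq|v|^2_{\i\partial\bar\partial\phi}$, you have discarded the contribution of $\lambda\chi$ entirely, and the resulting chain
\[
\int_\Omega|u|^2e^{-\phi}\,dV\leq\int_\Omega|u|^2e^{-\tilde\phi}\,dV\leq\int_\Omega|v|^2_{\i\partial\bar\partial\phi}e^{-\tilde\phi}\,dV\leq e^{\lambda}\int_\Omega|v|^2_{\i\partial\bar\partial\phi}e^{-\phi}\,dV
\]
only reproduces H\"ormander's estimate with a constant $e^{\lambda}\geq1$; no factor of $r$ can appear from a pointwise weight comparison. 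But the entire content of Donnelly--Fefferman is the regime $r\ll1$, where $C_0r<1$ strictly improves on H\"ormander. Concretely, for $\phi=\varepsilon|z|^2$ on the unit ball the hypothesis holds with $r=\varepsilon$, the claimed bound is $\int|u|^2e^{-\phi}\leq C_0\int|v|^2e^{-\phi}$ uniformly in $\varepsilon$, while your route yields only $e^{\lambda}\varepsilon^{-1}\int|v|^2e^{-\phi}$, which degenerates as $\varepsilon\to0$. The missing idea is that the gain must come either from the minimality of the solution (orthogonality to $\ker\bar\partial$ in the \emph{modified} weight, as in Berndtsson--Charpentier) or from a genuinely twisted identity --- not from comparing $e^{-\tilde\phi}$ with $e^{-\phi}$. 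Your second route is the correct one (and, suitably executed with $\tau$ a function of $\phi/r$ and $B$ chosen so that the term $B^{-1}\i\partial\tau\wedge\bar\partial\tau$ is absorbed by the hypothesis, it does produce the factor $r$), but as written it is a plan rather than a proof: the specific choice of $\tau$ and $B$, the verification that the twisted curvature term dominates a fixed multiple of $r^{-1}$ times the relevant operator, and the boundedness of the twist factor $\tau+B$ needed to return to the untwisted $L^2(e^{-\phi})$ norm are all left open, and these are exactly the places where the constant $C_0r$ is either won or lost.
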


\begin{theorem}[Berndtsson--Charpentier \cite{BC00},  see also Chen \cite{Ch22} and B\l ocki \cite{Bl13}]\label{1.2}
Let $\Omega$ be a bounded pseudoconvex domain in $\mathbb{C}^n$ and $\phi\in\PSH(\Omega)$. Assume that $\psi$ is strictly plurisubharmonic so that for \[r\i\partial\bar\partial\psi\geq \i\partial\psi\wedge\bar\partial\psi\] for some $r>0$. Then for the equation $\bar\partial u=v$, where $\bar\partial v=0$ and $v\in L^2(\Omega, \phi-\psi)$, there exists a $u\in L^2(\Omega,\phi-\psi)$ so that \[\int_\Omega|u|^2e^{\psi-\phi}\,dV\leq \frac{1}{(1-\sqrt{r})^2}\int_\Omega|v|^2_{\i\partial\bar\partial\phi}e^{\psi-\phi}\,dV.\] 
    
\end{theorem}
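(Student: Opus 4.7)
My plan is to use the standard functional-analytic framework: reduce to a dual a priori estimate and then establish it by combining the Bochner--Kodaira--H\"ormander (BKH) identity for the weight $\phi-\psi$ with the algebraic hypothesis on $\psi$. By the Hahn--Banach/Riesz representation argument, the existence of $u \in L^2(\Omega,\phi-\psi)$ with $\bar\partial u = v$ and the stated norm bound is equivalent to the dual a priori estimate
\[
|\langle v,\alpha\rangle_{\phi-\psi}|^2 \leq \frac{1}{(1-\sqrt r)^2}\int |v|^2_{\i\partial\bar\partial\phi}\,e^{\psi-\phi}\,dV \cdot \|\bar\partial^*_{\phi-\psi}\alpha\|^2_{\phi-\psi}
\]
for every $(0,1)$-form $\alpha\in\Domain(\bar\partial^*_{\phi-\psi})\cap\ker\bar\partial$. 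The pointwise matrix Cauchy--Schwarz $|\langle v,\alpha\rangle|^2\leq |v|^2_{\i\partial\bar\partial\phi}\cdot\langle\i\partial\bar\partial\phi\cdot\alpha,\alpha\rangle$ followed by the integral Cauchy--Schwarz reduces the task to proving
\[
I_\phi := \int\langle\i\partial\bar\partial\phi\cdot\alpha,\alpha\rangle e^{\psi-\phi}\,dV \leq \frac{1}{(1-\sqrt r)^2}\|\bar\partial^*_{\phi-\psi}\alpha\|^2_{\phi-\psi}.
\]

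Next, after approximating $\phi,\psi$ by smooth functions on a relatively compact exhaustion of $\Omega$, I apply the BKH identity in $L^2(\Omega,\phi-\psi)$. Pseudoconvexity of $\Omega$ makes the boundary Levi term nonnegative, and the interior gradient term is also nonnegative; dropping both and using $\bar\partial\alpha = 0$ yields
\[
\|\bar\partial^*_{\phi-\psi}\alpha\|^2_{\phi-\psi}\geq I_\phi - I_\psi,\qquad I_\psi:=\int\langle\i\partial\bar\partial\psi\cdot\alpha,\alpha\rangle e^{\psi-\phi}\,dV,
\]
so $I_\phi\leq \|\bar\partial^*_{\phi-\psi}\alpha\|^2_{\phi-\psi}+I_\psi$; the remaining task is to control the ``wrong-signed'' term $I_\psi$ by a multiple of $\|\bar\partial^*_{\phi-\psi}\alpha\|^2_{\phi-\psi}$.

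The crux of the argument, and the main obstacle, is this last step. The hypothesis $r\,\i\partial\bar\partial\psi\geq \i\partial\psi\wedge\bar\partial\psi$ is equivalent to the pointwise Cauchy--Schwarz-type bound $|\langle\alpha,\bar\partial\psi\rangle|^2\leq r\langle\i\partial\bar\partial\psi\cdot\alpha,\alpha\rangle$, which integrates to $\|\langle\alpha,\bar\partial\psi\rangle\|^2_{\phi-\psi}\leq r\,I_\psi$. I combine this with the adjoint transformation identity $\bar\partial^*_\phi\alpha = \bar\partial^*_{\phi-\psi}\alpha + \langle\alpha,\bar\partial\psi\rangle$ (verified by direct integration by parts) and with an integration by parts on $I_\psi$ that uses $\bar\partial\alpha=0$ to rewrite $\bar\partial\langle\alpha,\bar\partial\psi\rangle$. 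This produces an identity expressing $I_\psi$ in terms of the inner product $\langle\bar\partial^*_{\phi-\psi}\alpha,\langle\alpha,\bar\partial\psi\rangle\rangle_{\phi-\psi}$ modulo a gradient-type remainder that is absorbable into the gradient contribution dropped in the previous step. An $L^2$ Cauchy--Schwarz together with the hypothesis bound on $\|\langle\alpha,\bar\partial\psi\rangle\|_{\phi-\psi}$ then produces a quadratic inequality in $\sqrt{I_\psi}$; solving it and substituting into $I_\phi\leq \|\bar\partial^*_{\phi-\psi}\alpha\|^2_{\phi-\psi}+I_\psi$ delivers the sharp constant $(1-\sqrt r)^{-2}$. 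The delicate part is the precise integration-by-parts bookkeeping and the final optimization that produces exactly this constant; a cleaner alternative would replace this step by a twisted BKH identity with the twist $\tau = e^{\psi}$ in the background weight $\phi$, which automatically generates the needed $\bar\partial\psi$-cross terms and may ease the extraction of the constant.
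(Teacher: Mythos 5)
First, note that the paper does not prove Theorem \ref{1.2} at all: it is quoted as background from Berndtsson--Charpentier, so there is no in-paper argument to compare yours against. Judged on its own, your outline gets the standard skeleton right (duality reduction, pointwise Cauchy--Schwarz against $\i\partial\bar\partial\phi$, the Morrey--Kohn--H\"ormander inequality in the weight $\phi-\psi$ giving $\|\bar\partial^*_{\phi-\psi}\alpha\|^2_{\phi-\psi}\geq I_\phi-I_\psi$, and the adjoint-shift identity $\bar\partial^*_\phi\alpha=\bar\partial^*_{\phi-\psi}\alpha+\langle\alpha,\bar\partial\psi\rangle$), but the entire content of the theorem is the step you defer to ``delicate integration-by-parts bookkeeping,'' and that step does not close as described. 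Concretely: integrating $I_\psi=\int\sum\psi_{j\bar k}\alpha_j\bar\alpha_k e^{\psi-\phi}$ by parts and using $\bar\partial\alpha=0$ produces, besides the good term $\int\langle\alpha,\bar\partial\psi\rangle\,\overline{\bar\partial^*_{\phi-\psi}\alpha}\,e^{\psi-\phi}$, the first-order cross term $\int\sum_{j,k}\psi_j(\partial_{\bar j}\alpha_k)\bar\alpha_k e^{\psi-\phi}$. Absorbing it into the gradient term $\sum\int|\partial_{\bar j}\alpha_k|^2e^{\psi-\phi}$ by completing the square costs $\int|\bar\partial\psi|^2|\alpha|^2e^{\psi-\phi}$, and the hypothesis $r\,\i\partial\bar\partial\psi\geq\i\partial\psi\wedge\bar\partial\psi$ only controls the smaller quantity $|\langle\alpha,\bar\partial\psi\rangle|^2\leq r\langle\i\partial\bar\partial\psi\,\alpha,\alpha\rangle$; for $(0,1)$-forms with $n\geq 2$ the gap between $|\bar\partial\psi|^2|\alpha|^2$ and $|\langle\alpha,\bar\partial\psi\rangle|^2$ is not controlled by anything in the hypotheses. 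There is also an internal inconsistency of bookkeeping: you first drop the gradient (and boundary) terms to get $\|\bar\partial^*_{\phi-\psi}\alpha\|^2\geq I_\phi-I_\psi$, and then propose to absorb a remainder ``into the gradient contribution dropped in the previous step''; once dropped, it is not available. So the claimed quadratic inequality in $\sqrt{I_\psi}$, and hence the constant $(1-\sqrt r)^{-2}$, is not actually established.

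For reference, the argument of Berndtsson--Charpentier avoids this difficulty entirely and is worth internalizing. Let $u_0$ be the solution of $\bar\partial u_0=v$ that is minimal in $L^2(e^{-\phi})$, i.e.\ $u_0\perp\ker\bar\partial$ there. Then $\int u_0e^{\psi}\,\bar h\,e^{-\phi-\psi}=\int u_0\bar h\,e^{-\phi}=0$ for holomorphic $h$, so $u_0e^{\psi}$ is the minimal solution in $L^2(e^{-\phi-\psi})$ of $\bar\partial(u_0e^{\psi})=e^{\psi}(v+u_0\bar\partial\psi)$. H\"ormander's estimate in the weight $\phi+\psi$ gives
\[
\int|u_0|^2e^{\psi-\phi}\,dV\;\leq\;\int\big|v+u_0\bar\partial\psi\big|^2_{\i\partial\bar\partial(\phi+\psi)}e^{\psi-\phi}\,dV,
\]
and here the hypothesis enters through the \emph{inverse} Hessian as $|\bar\partial\psi|^2_{\i\partial\bar\partial\psi}\leq r$, so that $|u_0\bar\partial\psi|^2_{\i\partial\bar\partial(\phi+\psi)}\leq r|u_0|^2$ while $|v|^2_{\i\partial\bar\partial(\phi+\psi)}\leq|v|^2_{\i\partial\bar\partial\phi}$. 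Splitting $|v+u_0\bar\partial\psi|^2\leq(1+\varepsilon^{-1})|v|^2+(1+\varepsilon)|u_0\bar\partial\psi|^2$ with $\varepsilon=(1-\sqrt r)/\sqrt r$ and absorbing the $|u_0|^2$ term yields exactly the constant $(1-\sqrt r)^{-2}$. If you wish to stay with an a priori estimate, the correct vehicle is the twisted Morrey--Kohn--H\"ormander identity with twist $\tau=e^{\psi}$ (as you suggest in your last sentence), in which the cross term appears already contracted as $\langle\alpha,\bar\partial\psi\rangle$ rather than as $|\bar\partial\psi|\,|\alpha|$; but that identity must replace, not supplement, the untwisted inequality you start from.
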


It is well-known that Ohsawa--Takegoshi present an alternative $L^2$ estimate and an extensively studied extension theory. For more details, interested readers can refer to Ohsawa--Takegoshi \cite{OT87}, Straube \cite{St10}, Demailly \cite{De12}, Chen \cite{Ch11}, B\l ocki \cite{Bl13} and Guan--Zhou \cite{GZ15}.

In this article, we prove the following:

    \begin{maintheorem}\label{mainthm}
        Let $\Omega$ be a bounded pseudoconvex domain in $\mathbb{C}^n$ and $E:=\Omega\times\mathbb{C}\rightarrow X$ be a trivial line bundle with fiber metric $e^{-\phi}$ for $\phi\in C^2(\overline{\Omega})$. Assume that there exists $\eta\in C^2(\overline{\Omega})$, $q\geq1$ and $t_2,t_3\in\mathbb{R}$ so that \[\Xi_{t_2,t_3,\eta}\geq 0.\] Let $f\in L^2_{(0,q)}(\Omega)\cap\Domain(\bar\partial)$ so that $\bar\partial f=0$, there exists $u\in\Domain(\bar\partial)$ so that \[\|u\|^2_\phi\leq (t_2+t_3)\dinn{\Xi^{-1}_{t_2,t_3,\eta}f,f}[\phi].\]
    \end{maintheorem}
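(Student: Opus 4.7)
The plan is to follow the classical Andreotti--Vesentini--H\"ormander duality strategy. By the closed range theorem, producing $u \in \Domain(\bar\partial)$ with $\bar\partial u = f$ and the bound $\|u\|^2_\phi \le M$ reduces to establishing the a priori estimate
\[|\inn{f,\alpha}[\phi]|^2 \le M\bigl(\|\bar\partial\alpha\|^2_\phi + \|\bar\partial^*_\phi \alpha\|^2_\phi\bigr)\]
for every $\alpha \in \Domain(\bar\partial)\cap\Domain(\bar\partial^*_\phi)$. The $\|\bar\partial\alpha\|^2_\phi$ term is permitted on the right because $\bar\partial f=0$ allows one to replace $\alpha$ by its $\ker\bar\partial$-projection without altering the pairing on the left. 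The goal therefore becomes to prove this inequality with the specific constant $M=(t_2+t_3)\dinn{\Xi^{-1}_{t_2,t_3,\eta}f,f}[\phi]$.

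The heart of the argument is a twisted Bochner--Kodaira--Kohn--Morrey inequality engineered to produce the operator $\Xi_{t_2,t_3,\eta}$. I would start from the classical integrated identity valid for $\alpha \in C^2_{(0,q)}(\overline\Omega)\cap\Domain(\bar\partial^*_\phi)$,
\[\|\bar\partial\alpha\|^2_\phi + \|\bar\partial^*_\phi \alpha\|^2_\phi = \|\bar\nabla\alpha\|^2_\phi + \dinn{[\i\partial\bar\partial\phi,\Lambda]\alpha,\alpha}[\phi] + \int_{\partial\Omega}\inn{[\i\partial\bar\partial\rho,\Lambda]\alpha,\alpha}e^{-\phi}\,dS,\]
where $\rho$ is a defining function of $\Omega$. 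The twisting function $\eta$ is then brought in, for instance by viewing $\phi$ as perturbed by a scalar multiple of $\eta$ or by conjugating with $e^{c\eta}$; this generates extra second-order curvature in $\i\partial\bar\partial\eta$ together with cross-terms linear in $\bar\partial\eta$. Those cross-terms are controlled by two weighted Cauchy--Schwarz inequalities, with the free parameters calibrated to $t_2$ and $t_3$, which simultaneously produce quadratic contributions from $\i\partial\eta\wedge\bar\partial\eta$. The boundary integral is non-negative by pseudoconvexity of $\Omega$ and may be dropped, as may the non-negative kinetic term $\|\bar\nabla\alpha\|^2_\phi$. Packaging the surviving curvature data into the single operator $\Xi_{t_2,t_3,\eta}$ and accounting for the multipliers $t_2,t_3$ yields
\[(t_2+t_3)\bigl(\|\bar\partial\alpha\|^2_\phi + \|\bar\partial^*_\phi \alpha\|^2_\phi\bigr)\ge \dinn{\Xi_{t_2,t_3,\eta}\alpha,\alpha}[\phi].\]

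Since $\Xi_{t_2,t_3,\eta}\ge 0$ by hypothesis, it admits a generalised self-adjoint square root, and the pointwise Cauchy--Schwarz inequality delivers
\[|\inn{f,\alpha}[\phi]|^2 \le \dinn{\Xi^{-1}_{t_2,t_3,\eta}f,f}[\phi]\cdot\dinn{\Xi_{t_2,t_3,\eta}\alpha,\alpha}[\phi].\]
Combining this with the Bochner step is exactly the required a priori estimate, after which Hahn--Banach produces the desired $u$. A standard density argument (approximation of $\bar\partial^*_\phi$-domain elements by smooth forms in the graph norm, legitimate since $\Omega$ is bounded and $\phi\in C^2(\overline\Omega)$) lifts the inequality from test forms to the full $\Domain(\bar\partial)\cap\Domain(\bar\partial^*_\phi)$. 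The principal obstacle lies in the second step: one must arrange the Cauchy--Schwarz splitting so that every cross-term condenses precisely into $\Xi_{t_2,t_3,\eta}$ with sharp prefactor $(t_2+t_3)$, and this must be done \emph{without} assuming plurisubharmonicity of $\phi$, so any negative contribution from $\i\partial\bar\partial\phi$ has to be absorbed by $\i\partial\bar\partial\eta$ entirely within $\Xi_{t_2,t_3,\eta}$ itself, leaving no slack.
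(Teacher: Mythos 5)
Your overall skeleton --- a twisted a priori estimate for test forms, Cauchy--Schwarz against $\Xi^{-1}_{t_2,t_3,\eta}$, Hahn--Banach, and a density argument --- matches the paper's. But the substance of the theorem is exactly the step you defer to at the end as ``the principal obstacle'': actually producing the inequality $(t_2+t_3)\|\bar\partial^*_\phi \alpha\|^2_\phi\ge\dinn{\Xi_{t_2,t_3,\eta}\alpha,\alpha}[\phi]$ for $\alpha\in\ker(\bar\partial)\cap\Domain(\bar\partial^*_\phi)$, and your sketch of how to obtain it (``viewing $\phi$ as perturbed by a scalar multiple of $\eta$ or by conjugating with $e^{c\eta}$'') is the standard Donnelly--Fefferman/Berndtsson--Charpentier substitution, which does not visibly produce this operator. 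The paper's device is different: it writes the Bochner--Kodaira--Nakano identity twice, once for the weight $\phi$ and once for $\phi+\eta$, and subtracts. The second-order terms in the form cancel, leaving the first-order ``basic equation''
\[\bar\partial\bigl((\partial\eta)^\sharp\lrcorner u\bigr)+(\partial\eta)^\sharp\lrcorner \bar\partial u=-\partial\eta\wedge\partial^* u+\i\Theta_{\eta}\Lambda u\]
for $(n,q)$-forms on $\overline\Omega$. Pairing with $u$, integrating, and splitting the cross terms by weighted Cauchy--Schwarz produces a term $t_3\|\partial^*\tilde u\|^2_\phi$, which is then traded via the Morrey--Kohn--H\"ormander formula for $t_3\bigl(\|\bar\partial u\|^2_\phi+\|\bar\partial^*_\phi u\|^2_\phi-\dinn{\i\Theta_\phi\Lambda\tilde u,\tilde u}[\phi]-\int_{\partial\Omega}e^{-\phi}\Hessian_\delta(u,u)\,d\sigma\bigr)$, with pseudoconvexity killing the boundary term. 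That exchange is precisely where the curvature $\i\Theta_{t_3\phi+\eta}$ (rather than $\i\partial\bar\partial\eta$ alone) enters, and it is the reason $\phi$ need not be plurisubharmonic. Your outline never explains where a $t_3\,\i\partial\bar\partial\phi$ contribution of the correct sign would come from, nor why the two corrections $-\frac{1}{4t_2}\i\partial\eta\wedge\bar\partial\eta$ and $-\frac{1}{4t_3}|\bar\partial\eta|^2$ carry separate coefficients: they arise from two distinct contractions, $(\partial\eta)^\sharp\lrcorner u$ and $(\bar\partial\eta)^\sharp\lrcorner u$, estimated with the parameters $t_2$ and $t_3$ respectively, together with the identity $|(\partial\eta)^\sharp\lrcorner v|^2_\mathbb{C}=-|\bar\partial\eta\wedge v|^2_\mathbb{C}+|\bar\partial\eta|^2_\mathbb{C}|v|^2_\mathbb{C}$.

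A smaller discrepancy: you assert the intermediate inequality with $(t_2+t_3)$ multiplying both $\|\bar\partial\alpha\|^2_\phi$ and $\|\bar\partial^*_\phi\alpha\|^2_\phi$, whereas the paper's full inequality has coefficient $(t_1+t_3)$ on $\|\bar\partial\alpha\|^2_\phi$ together with an extra $-\frac{1}{4t_1}\|\bar\partial\eta\wedge\alpha\|^2_\phi$ on the right; the clean form with only $(t_2+t_3)\|\bar\partial^*_\phi\alpha\|^2_\phi$ holds after restricting to $\alpha\in\ker(\bar\partial)$, which is legitimate by the orthogonal decomposition you invoke. So the functional-analytic frame you set up is sound and coincides with the paper's, but the proof as written omits its central computation and, as gestured, would head toward a different (already known) estimate.
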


Here we define the operator $\Xi$ on $(0,1)$-forms by the following. For the general definition of $(0,q)$-forms, see Section \ref{thelastsection}.

\begin{definition}
    Let $f=\sum f_idz_i\in L^2_{(0,1)}(\Omega)$. Then  \[\Xi f=(\i\frac{\partial^2(\eta+\phi)}{\partial z_i \bar z_j}-\frac{1}{4}\i\frac{\partial\eta}{\partial z_i}\frac{\partial\eta}{\partial \bar z_j})f_i\bar f_j -\frac{1}{4} |\bar\partial\eta|^2|f|^2.\]
\end{definition}
We want to remark that our formula is different from the one from Theorem \ref{1.1} and Theorem \ref{1.2}. Both Theorem \ref{1.1} and Theorem \ref{1.2} require $\phi,\psi\in\PSH(\Omega)$ but our theorem does not need this condition. Indeed, our weight function $\phi$ does not need to be plurisubharmonic and for a fixed non-plurisubharmonic weight $\phi$, we can get an estimate by varying the function $\eta$. We believe the candidate pool of $\eta$ depends on the geometric property of the domain $\Omega$. We also want to remark that the theorem of Ohsawa--Takegoshi in \cite{OT87} and Theorem \ref{1.2} can make non-plurisubharmonic weight as well. The formulation and proof of our estimate are different from theirs. In this note, we only consider trivial line bundles.

The orgnization of the note is as follows: after introduction and preliminary in Section \ref{introduction} and \ref{pre}, we introduce our new technique in Section \ref{sectionof2eq} and eventually, reformulate it to the form of Theorem \ref{mainthm} in Section \ref{thelastsection}.

	\section{Preliminary}\label{pre}
 In this section, we are going to recall the $L^2$-method of $\bar\partial$ equation in two settings: in the complete open K\"{a}hler manifold and in the bounded domain (with boundary) in $\mathbb{C}^n$ with the induced Euclidean metric.
 
 Let $I=\left(i_{1}, \ldots, i_{p}\right)$ is a multi-index with integer components, $i_{1}<\ldots<i_{p}$ and $d x_{I}:=$ $d x_{i_{1}} \wedge \ldots \wedge d x_{i_{p}} .$ The notation $|I|$ stands for the number of components of $I$. For example, $|(i_1,i_2,\cdots,i_p)|=p$.

Let $X$ be a K\"{a}hler manifold with a metric, in a local coordinate, \[g=\sum_{i,j}g_{ij}dz_i\otimes d\bar{z}_j\in\Gamma(\extp T^{*(1,0)}X\otimes \extp T^{*(0,1)}X).\] From now on, we denote the set of smooth sections $\Gamma(\extp^p T^{*(1,0)}X\wedge \extp^q T^{*(0,1)}X)$ by $\mathscr{C}^\infty_{p,q}(X,\mathbb{C})$. The associated K\"{a}hler form is \[
	\omega=-\Im g=\frac{\sum\bar{g}_{ji}d\bar{z}_j\otimes dz_i-\sum g_{ij}dz_i\otimes d\bar{z}_j}{2\i}=\frac{\sum g_{ij}(d\bar{z}_j\otimes dz_i-dz_i\otimes d\bar{z}_j)}{2\i}=\frac{\i}{2}\sum g_{ij}dz_i\wedge d\bar{z}_j.
\]

The Lefschetz operator $L$ associated with $\omega$ is defined to be $Lu=\omega\wedge u$ for $u\in\mathscr{C}^\infty_{p,q}(X,\mathbb{C})$. The Lefschetz operator is a pointwise action. It can be defined even on non-smooth sections.

The volume form $\,dV=\,dV_\omega$ associated with $\omega$ is $\,dV=\frac{\omega^n}{n!}$. For each $\omega$, we may define the Hodge-$*$ operator $*=*_\omega: \mathscr{C}^\infty_{p,q}(X,\mathbb{C})\rightarrow\mathscr{C}^\infty_{n-q,n-p}(X,\mathbb{C})$ by (Page 33 of Huybrechts \cite{Hu05}, Page 300 of Demailly \cite{De12}) \[u\wedge*\bar{v}=\inn{u,v}[\mathbb{C}]\,dV\] for any $u,v\in\mathscr{C}^\infty_{p,q}(X,\mathbb{C})$, The Hodge-$*$ operator is a pointwise action and enjoys the property \[**=(-1)^{(p+q)(2n-p-q)}\mathrm{id}\] on $\mathscr{C}^\infty_{p,q}(X,\mathbb{C})$.

We may define the formal-adjoint of $L$, $\bar{\partial}$ and $\partial$. We define $\Lambda:\mathscr{C}^\infty_{p,q}(X,\mathbb{C})\rightarrow\mathscr{C}^\infty_{p-1,q-1}(X,\mathbb{C})$ as follows: \[\llangle Lu, v\rrangle=\llangle u, \Lambda v\rrangle.\] It is clear that $\Lambda=L^*=*^{-1}L*$ is the adjoint of $L$.

For any complex manifold $X$ (even without a metric), we can define the exterior differential $d=\partial+\bar{\partial}$ on $\mathscr{C}^\infty_{p,q}(X,\mathbb{C})$. Clearly $d$, $\partial$ and $\bar{\partial}$ are independent of the K\"{a}hler metric.

The formal adjoint of $\partial$ is $\partial^*=-*\bar{\partial}*$ and this can be seen from the following calculation: for $u\in \mathscr{C}^\infty_{p-1,q}(X,\mathbb{C})$ and $v\in \mathscr{C}^\infty_{p,q;c}(X,\mathbb{C})$,
\[\int_X\inn{\partial u, v}_\mathbb{C}\,dV=\int_X\partial u\wedge *\bar{v}=-(-1)^{\mathrm{deg} u}\int_X u\wedge \partial*\bar{v}=-(-1)^{\mathrm{deg} u}\int_X \langle u, *^{-1}\bar{\partial}*v\rangle\,dV=-\int_X \inn{u, *\bar{\partial}*v}_\mathbb{C}\,dV.\]

For the same reason, $\bar{\partial}^*=-*\partial*$. It is clear that $L$, $\Lambda$, $\bar{\partial}^*$, $\,dV$ and $\partial^*$ depend on the K\"{a}hler metric.

On $X$, a differential $(p,q)$-form can be written locally, after choosing a local coordinate system $(z_i)$,
\[u=\sum_{|I|=p, |J|=q} u_{IJ} \,dz_I\wedge\,d\bar{z}_J,\] where $I=(i_1,\cdots, i_p)$ and $J=(j_1,\cdots, j_q)$ are multi-indices with integer components: the $dz_I=\,dz_{i_1}\wedge\cdots\wedge \,dz_{i_p}$ and $dz_J=\,dz_{j_1}\wedge\cdots\wedge \,dz_{j_q}$.

We want to list the following basic facts:
\begin{enumerate}
    \item The $\partial$ and $\bar\partial$ of $X$ are independent from either the metric of $X$ or the fiber metric on $E$. Indeed, they can be defined without any metrics.
    \item The $\partial_E$ and the Chern connection $\bar\partial$ on $E$ are dependent on the fiber metric on $E$. However, the $\partial_E$ and the Chern connection $\bar\partial$ on $E$ can be defined without metrics on $X$.
    \item The $\partial_E^*$ and the $\bar\partial^*$ on $E$ depend on the metric of $X$. But $\partial^*_E$ is independent from the fiber metric of $E$ in our settings (see Section \ref{sectionof2eq}).
    \item The Bochner--Kodaira--Nakano identity is a local identity and doesn't need the completeness of the metric on $X$.
\end{enumerate}

\subsection{The K\"{a}hler identity}

For a K\"{a}hler metric, at $x_0\in X$, we can find a geodesic coordinate system $(z_i)_{i=1}^n$ so that \[\omega=\i\sum_{i,j}\omega_{ij}dz_i\wedge d\bar{z}_j,\] where $\omega_{ij}=\delta_{ij}+O(|z|^2)$. This turns that, \[dV=(1+O(|z|^2))\,dV_0,\] where $dV_0$ is the volume form with the metric of $\delta_{ij}$.

In this geodesic coordinate system, \[\bar{\partial}u=\sum_{I,J,k}\frac{\partial u_{IJ}}{\partial\bar{z}_k}d\bar{z}_k\wedge dz_I\wedge d\bar{z}_J.\] Letting $u,v$ be $(p, q)$ and $(p, q+1)$-forms with compact support in a neighborhood $U(x_0)$ of $x_0$, we have
\[\llangle\bar{\partial}u, v\rrangle=\int_{U(x_0)}\sum_{I,J,k}\frac{\partial u_{IJ}}{\partial\bar{z}_k}\bar{v}_{I, kJ}+\sum_{I,J,k,K,L}a_{IJKLk}\frac{\partial u_{IJ}}{\partial\bar{z}_k}\bar{v}_{KL}\,dV_0,\] where $a_{IJKLk}(z)=O(|z|^2)$. It is clear that \[\bar{\partial}^*v=-\frac{\partial v_{IJ}}{\partial z_k}\frac{\partial}{\partial\bar{z}_k} \lrcorner (dz_I\wedge d\bar{z}_J)-a_{IJKLk}\frac{\partial v_{IJ}}{\partial z_k}dz_K\wedge d\bar{z}_L+b_{IJKLk}v_{IJ}dz_K\wedge d\bar{z}_L,\] with $b_{IJKLk}=O(|z|)$. In other words, \[\bar{\partial}^*v=-\frac{\partial v_{IJ}}{\partial z_k}\frac{\partial}{\partial\bar{z}_k} \lrcorner (dz_I\wedge d\bar{z}_J)+\sum_{k}O(|z|^2)\frac{\partial v}{\partial z_k}+O(|z|)v.\]

We calculate \[[\bar{\partial}^*, L]u=\bar{\partial}^*(\omega\wedge u)-\omega\wedge\bar{\partial}^*u=-\frac{\partial}{\partial \bar{z}_k}\lrcorner\omega\wedge\frac{\partial u_{IJ}}{\partial z_k}(dz_I\wedge d\bar{z}_J)+O(|z|)=\i\frac{\partial u_{IJ}}{\partial z_k}dz_k\wedge(dz_I\wedge d\bar{z}_J)+O(|z|).\] We obtain that $[\bar{\partial}^*, L]u|_{x_0}=\i\partial u|_{x_0}$. Since $x_0\in\Omega$ is arbitrary, we obtain $[\bar{\partial}^*, L]=\i\partial$.

By taking conjugate and adjoint, we also obtain $[\partial^*, L]u=-\i\bar{\partial}$, $[\Lambda,\bar{\partial}]=-\i\partial^*$ and $[\Lambda, \partial]=\i\bar{\partial}^*$.

By the Jocobi's identity, we have the K\"{a}hler identity: \[\bar{\partial}\bar{\partial}^*+\bar{\partial}^*\bar{\partial}-\partial\partial^*-\partial^*\partial=[\bar{\partial},\bar{\partial}^*]-[\partial,\partial^*]=-\i[\bar{\partial},[\Lambda, \partial]]+\i[\partial,[\bar{\partial},\Lambda]]=-\i[\Lambda, [\partial,\bar{\partial}]]=0.\]

\subsection{The Bochner--Kodaira--Nakano identity}
To extend this result to a Hermitian holomorphic vector bundle, we consider a normal coordinate system. The following result is well-known, but for the sake of completeness, we give proof as follows.

\begin{proposition}[Demailly \cite{De12}, Page 270]
	Let $E\rightarrow X$ be a Hermitian holomorphic vector bundle. For every point $x_0\in X$ and every coordinate system $(z_j)_{1\leq j\leq n}$ at $x_0\in X$, there exists a holomorphic frame $(e_\lambda)_{1\leq\lambda\leq r}$ in a neighborhood of $x_0$ such that\[\inn{e_\lambda (z), e_\mu(z)}[\mathbb{C}]=\delta_{\lambda\mu}-\sum_{1\leq j,k\leq n}c_{jk\lambda\mu} z_j\bar{z}_k+O(|z|^3),\] where $(c_{jk\lambda\mu})$ are the coefficients of the Chern Curvature tensore $\Theta (E)|_{x_0}$, i.e., \[\Theta(E)|_{x_0} e_\lambda(x_0)=\sum_{j,k,\lambda,\mu} c_{jk\lambda\mu}dz_j\wedge d\bar{z}_k\otimes e_\mu(x_0).\]Such a frame $(e_\lambda)$ is called a normal coordinate frame at $x_0$.
\end{proposition}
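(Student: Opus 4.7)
The approach is constructive: start with an arbitrary holomorphic frame $(\tilde e_\lambda)$ near $x_0$ and seek the normal frame as $e_\lambda = \sum_\mu A_{\lambda\mu}(z)\tilde e_\mu$ for a holomorphic matrix-valued function $A(z)$ with $A(0)=I$. Since $A$ is holomorphic, the new frame is automatically holomorphic; the only freedom is in the low-order Taylor coefficients of $A$, and I want to use this freedom to force the Gram matrix into the claimed normal form through order $|z|^2$.

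First I would perform a constant unitary change of frame to arrange $\tilde H_{\lambda\mu}(0):=\langle\tilde e_\lambda(0),\tilde e_\mu(0)\rangle=\delta_{\lambda\mu}$. Taylor-expanding the Hermitian matrix $\tilde H(z)$ and using Hermitian symmetry $\tilde H_{\mu\lambda}(z)=\overline{\tilde H_{\lambda\mu}(z)}$, I obtain
\[\tilde H_{\lambda\mu}(z)=\delta_{\lambda\mu}+\sum_j\bigl(a_{j\lambda\mu}z_j+\overline{a_{j\mu\lambda}}\,\bar z_j\bigr)+\sum_{j,k}\bigl(b_{jk\lambda\mu}z_jz_k+\overline{b_{jk\mu\lambda}}\,\bar z_j\bar z_k+d_{jk\lambda\mu}z_j\bar z_k\bigr)+O(|z|^3),\]
with $b_{jk\lambda\mu}$ symmetric in $(j,k)$ and $d_{jk\lambda\mu}=\overline{d_{kj\mu\lambda}}$. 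Writing $A_{\lambda\mu}(z)=\delta_{\lambda\mu}+\sum_j\alpha_{j\lambda\mu}z_j+\sum_{j\leq k}\beta_{jk\lambda\mu}z_jz_k$ and expanding $\langle e_\lambda,e_\mu\rangle=(A\tilde H\bar A^T)_{\lambda\mu}$ through order $|z|^2$, the $z_j$-coefficient becomes $a_{j\lambda\mu}+\alpha_{j\lambda\mu}$, which I kill by setting $\alpha_{j\lambda\mu}=-a_{j\lambda\mu}$; by conjugate symmetry this also eliminates the $\bar z_j$-term. After this substitution the $z_jz_k$-coefficient is $b_{jk\lambda\mu}$ plus a quadratic-in-$\alpha$ correction, and since this sum is symmetric in $(j,k)$ I can choose $\beta_{jk\lambda\mu}$ (symmetric in $j,k$) to cancel it; again Hermitian symmetry simultaneously removes the $\bar z_j\bar z_k$-term. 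What survives is a mixed term $\sum c'_{jk\lambda\mu}z_j\bar z_k+O(|z|^3)$ for some coefficients $c'_{jk\lambda\mu}$.

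It remains to identify $c'_{jk\lambda\mu}=-c_{jk\lambda\mu}$. I would do this by computing the Chern curvature in the newly built frame $(e_\lambda)$: in any holomorphic frame, the Chern connection matrix is $\theta=H^{-1}\partial H$ with curvature $\Theta=\bar\partial\theta+\theta\wedge\theta$. In the normalized frame one has $H(0)=I$ and (by construction) $\partial H|_{x_0}=0$, so $\theta|_{x_0}=0$ and $\Theta|_{x_0}=\bar\partial\partial H|_{x_0}$. Applying $\bar\partial\partial$ to $H_{\lambda\mu}=\delta_{\lambda\mu}+\sum c'_{jk\lambda\mu}z_j\bar z_k+O(|z|^3)$ and matching with the definition $\Theta(E)|_{x_0}e_\lambda(x_0)=\sum c_{jk\lambda\mu}dz_j\wedge d\bar z_k\otimes e_\mu(x_0)$ gives the stated sign and thus the formula.

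The main obstacle will be the bookkeeping in the second-order expansion of $A\tilde H\bar A^T$ and verifying that the symmetric choice of $\beta$ genuinely suffices to kill the pure holomorphic quadratic piece. The structural reason it succeeds is that $b_{jk\lambda\mu}$ comes from second $z$-derivatives and is therefore symmetric in $(j,k)$, so it lies exactly in the range of the symmetric-$\beta$ adjustment; by contrast, the mixed coefficient $d_{jk\lambda\mu}$ has no such symmetry and cannot be removed by any holomorphic gauge transformation—this obstruction is precisely the local invariant encoded by the Chern curvature, which is why the surviving coefficient is forced to coincide with $-c_{jk\lambda\mu}$.
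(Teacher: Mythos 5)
Your proposal is correct and follows essentially the same route as the paper: normalize the Gram matrix at $x_0$, cancel the linear and pure-holomorphic quadratic Taylor coefficients by a holomorphic gauge transformation (your single matrix $A=I+\alpha z+\beta z^2$ is just the composition of the paper's two successive frame changes $h\to g\to e$), and identify the surviving mixed coefficient with $-c_{jk\lambda\mu}$ via the Chern connection, which you compute as $\theta=H^{-1}\partial H$ while the paper differentiates $\inn{e_\lambda,e_\mu}[\mathbb{C}]$ directly --- the same calculation in different notation. The only cosmetic slip is writing $\Theta=\bar\partial\theta+\theta\wedge\theta$ instead of $\Theta=d\theta+\theta\wedge\theta=\bar\partial\theta$, but since $\theta|_{x_0}=0$ this does not affect the conclusion.
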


\begin{proof}
	We fix $x_0\in X$ and take an arbitrary coordinate system $(h_j)_{1\leq j\leq n}$ so that $\inn{h_\lambda(x_0),h_\mu(x_0)}[\mathbb{C}]=\delta_{\lambda\mu}$. This can be done by at $x_0$ only. 
 
 By Taylor's theorem, we obtain that \[\inn{h_\lambda,h_\mu}[\mathbb{C}]=\delta_{\lambda\mu}+\sum_{j}a_{j\lambda\mu}z_j+\sum_{j}a'_{j\lambda\mu}\bar{z}_j+O(|z|^2).\] Since $\inn{h_\lambda,h_\mu}[\mathbb{C}]=\overline{\inn{h_\mu, h_\lambda}[\mathbb{C}]}$, we get $\bar{a}_{j\mu\lambda}=a'_{j\lambda\mu}$.
	
	For getting rid of the linear part in $\inn{h_\lambda,h_\mu}[\mathbb{C}]$, we make the first holomorphic coordinate change $g_\lambda(z):=h_\lambda(z)-\sum_{i,j}a_{j\lambda i}z_jh_i(z)$. Up to $O(|z|^2)$, we may compute \[\begin{split}
	    &\inn{g_\lambda, g_\mu}[\mathbb{C}]\\
     =&\inn{h_\lambda(z)-\sum_{i,j}a_{j\lambda i}z_jh_i(z),h_\mu(z)-\sum_{i,j}a_{j\mu i}z_jh_i(z)}[\mathbb{C}]\\
     =&\inn{h_\lambda(z),h_\mu(z)}[\mathbb{C}]-\sum_{i,j}\bar a_{j\mu i}\bar z_j\inn{h_\lambda(z),h_i(z)}[\mathbb{C}]-\sum_{i,j}a_{j\lambda i}z_j\inn{h_i(z),h_\mu(z)}[\mathbb{C}]+O(|z|^2)\\
     =&\delta_{\lambda\mu}+\sum_{j}a_{j\lambda\mu}z_j+\sum_{j}a'_{j\lambda\mu}\bar{z}_j-\sum_{j}\bar a_{j\mu \lambda}\bar z_j-\sum_{j}a_{j\lambda \mu}z_j+O(|z|^2)\\
     =&\delta_{\lambda\mu}+O(|z|^2).
	\end{split}\] When written up to $O(|z|^3)$, \[
	    \inn{g_\lambda, g_\mu}[\mathbb{C}]=\delta_{\lambda\mu}+\sum_{j,k}a_{jk\lambda\mu}z_j\bar{z}_k+\sum_{j,k}a'_{jk\lambda\mu}z_jz_k+\sum_{j,k}a''_{jk\lambda\mu}\bar{z}_j\bar{z}_k+O(|z|^3).
	\] Clearly, the following properties hold: $\bar{a}'_{jk\mu\lambda}=a''_{jk\lambda\mu}$ and $\bar{a}_{kj\mu\lambda}=a_{jk\lambda\mu}$.
	
	For simplifying the second order parts, we make the second holomorphic coordinate change $e_\lambda(z):=g_\lambda(z)-\sum_{j,k,i}a'_{jk\lambda i} z_jz_kg_i(z)$. Then
	\[\begin{split}
	    &\inn{e_\lambda,e_\mu}[\mathbb{C}]\\=&\inn{g_\lambda(z),g_\mu(z)}[\mathbb{C}]-\sum_{j,k,i}\bar a'_{jk\mu i} \bar z_j\bar z_k\inn{g_\lambda(z),g_i(z)}[\mathbb{C}]-\sum_{j,k,i}a'_{jk\lambda i} z_jz_k\inn{g_i(z),g_\mu(z)}[\mathbb{C}]+O(|z|^3)\\=&\delta_{\lambda\mu}+\sum_{j,k}a_{jk\lambda\mu}z_j\bar{z}_k+O(|z|^3).
	\end{split}\]
	
	Observe that,  \[d\inn{e_\lambda,e_\mu}[\mathbb{C}]=\sum_{j,k}a_{jk\lambda\mu}z_jd\bar{z}_k+\sum_{j,k}a_{jk\lambda\mu}\bar{z}_kdz_j+O(|z|^2).\] 
	
	The Chern connection respects $\inn{\cdot,\cdot}[\mathbb{C}]$. In other words, \[\sum_{j,k}a_{jk\lambda\mu}\bar{z}_kdz_j+O(|z|^2)=\inn{\partial_E e_\lambda, e_\mu}[\mathbb{C}]+\inn{e_\lambda, \bar\partial e_\mu}[\mathbb{C}].\] Since, $e_\mu$ is holomorphic, we obtain that \[\inn{\partial_E e_\lambda,e_\mu}[\mathbb{C}]=\sum_{j,k}a_{jk\lambda\mu}\bar{z}_kdz_j+O(|z|^2).\] This implies \[\partial_E e_\lambda=\sum_{j,k,\mu}a_{jk\lambda\mu}\bar{z}_k dz_j\otimes e_\mu+O(|z|^2)\] and so \[\Theta(E)e_\lambda=\bar{\partial}\partial_E e_\lambda=\sum_{j,k,\mu}a_{jk\lambda\mu}d\bar{z}_k\wedge dz_j\otimes e_\mu+O(|z|).\] This completes the proof when we define $c_{jk\lambda\mu}=-a_{jk\lambda\mu}$.
\end{proof}

Combining the geodesic coordinate system in $X$ and the normal coordinate system on the fiber of $E$, we are ready to prove the following Bochner--Kodaira--Nakano identity which generalizes the K\"{a}hler identity to Hermitian holomorphic vector bundle.

\begin{proposition}[Bochner--Kodaira--Nakano Identity, see Page 329 of Demailly \cite{De12}]
	Let $X$ be a K\"{a}hler manifold with metric $\omega$ and $E\rightarrow X$ be a Hermitian holomorphic vector bundle with fiber metric $h$. Let $\nabla=\partial_E+\bar{\partial}$ be the Chern connection. We have the identity: \[[\bar{\partial}^*_E, L]=\i\partial_E.\]
\end{proposition}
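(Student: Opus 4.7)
The plan is to imitate the proof of the scalar K\"ahler identity $[\bar\partial^*, L]=\i\partial$ from the previous subsection, reducing the bundle version to the scalar one by working in a geodesic coordinate system on $X$ at an arbitrary point $x_0\in X$ together with the normal holomorphic frame $(e_\lambda)_{1\le\lambda\le r}$ on $E$ furnished by the preceding proposition. Since the claimed equality is an identity between first-order differential operators acting on $E$-valued $(p,q)$-forms, it suffices to verify that both sides agree when evaluated at $x_0$ on an arbitrary test section and then let $x_0$ vary.

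First I would expand a local $E$-valued $(p,q)$-form as $u=\sum_\lambda u^\lambda\otimes e_\lambda$, where the $u^\lambda$ are scalar $(p,q)$-forms. In the normal frame one has $\inn{e_\lambda,e_\mu}[\mathbb{C}]=\delta_{\lambda\mu}+O(|z|^2)$, so the Chern connection one-form $h^{-1}\partial h$ vanishes at $x_0$; consequently $\partial_E u|_{x_0}=\sum_\lambda(\partial u^\lambda)|_{x_0}\otimes e_\lambda(x_0)$, i.e.\ $\partial_E$ acts componentwise at $x_0$. Likewise, in the geodesic coordinate system on $X$ one has $\omega_{ij}=\delta_{ij}+O(|z|^2)$, so the volume form and the Hodge star agree with their Euclidean counterparts at $x_0$ modulo $O(|z|^2)$. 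From these two facts I would derive, just as in the unweighted calculation of $\bar\partial^*$ from the previous subsection, the local expression
\[
\bar\partial_E^* u=-\sum_{I,J,k,\lambda}\frac{\partial u^\lambda_{IJ}}{\partial z_k}\Big(\frac{\partial}{\partial\bar z_k}\lrcorner(dz_I\wedge d\bar z_J)\Big)\otimes e_\lambda+\sum_k O(|z|^2)\frac{\partial u}{\partial z_k}+O(|z|)u,
\]
where the error terms collect all contributions coming from $\omega_{ij}-\delta_{ij}$, the volume form correction, and the derivatives of $h$.

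The remaining step is a direct comparison at $x_0$. Since $L=\omega\wedge\cdot$ and $\omega|_{x_0}=\i\sum_j dz_j\wedge d\bar z_j$, the computation of $[\bar\partial_E^*,L]u$ at $x_0$ becomes identical, componentwise in the frame $(e_\lambda)$, to the scalar calculation $[\bar\partial^*,L]u=\i\partial u$ already carried out in the previous subsection. Hence $[\bar\partial_E^*,L]u|_{x_0}=\i\partial_E u|_{x_0}$, and since $x_0$ is arbitrary this yields the identity on all of $X$. The main obstacle is the book-keeping: one must confirm that every correction produced by the non-flat metric on $X$, the non-trivial fiber metric $h$, the resulting Hodge star, and the formal integration by parts is $O(|z|)$, so that, even after one differentiation, the contribution at $x_0$ is zero. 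The normal holomorphic frame proposition, together with the K\"ahler condition $\omega_{ij}(z)=\delta_{ij}+O(|z|^2)$, is precisely what guarantees this cancellation.
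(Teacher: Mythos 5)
Your proposal is correct and follows essentially the same route as the paper: fix a geodesic coordinate system on $X$ and the normal holomorphic frame on $E$ at an arbitrary $x_0$, observe that $\partial_E$ and $\bar{\partial}^*_E$ then act componentwise modulo $O(|z|)$ errors, and reduce the commutator computation at $x_0$ to the already-established scalar identity $[\bar{\partial}^*,L]=\i\partial$. The book-keeping you flag (that all corrections from $\omega_{ij}-\delta_{ij}$, the volume form, and $h$ are at worst $O(|z|)$ after the one differentiation present in the commutator) is exactly the content of the paper's displayed computation of $\bar{\partial}^*_E t=\sum_\mu(\bar{\partial}^*\tau_\mu)\otimes e_\mu+O(|z|)$.
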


\begin{proof}
	On $X$, we fix a geodesic coordinate system $(z_i)_{i=1}^n$ and on $E$, we fix a normal coordinate system $(e_\lambda)_{\lambda=1}^r$. For any smooth section $s=\sum_\lambda \sigma_\lambda\otimes e_\lambda\in\mathscr{C}^\infty_{p,q}(X, E)$. We know, in a trivialization, 
	$\bar{\partial}_Es=\sum_\lambda \bar{\partial}\sigma_\lambda\otimes e_\lambda$ and $\partial_Es=\sum_\lambda \partial\sigma_\lambda\otimes e_\lambda+\sum_\lambda(-1)^{(p+q)}\sigma_\lambda \otimes\partial_E e_\lambda=\sum_\lambda \partial\sigma_\lambda\otimes e_\lambda+O(|z|)$.
	
	We are going to calculate $\bar{\partial}_E^*$. Let $t=\sum_\mu \tau_\mu\otimes e_\mu\in\mathscr{C}^\infty_{p,q+1;c}(U(x_0),E)$, $\bar{\partial}^*_Et=\sum_\mu \kappa_\mu\otimes e_\mu$ and a neighborhood $U(x_0)$ of $x_0$ in $X$. Consider 
	\[\int_{U(x_0)}\inn{\bar{\partial}_Es, t}[E]\,dV=\int_{U(x_0)}\inn{\sum_\lambda \bar{\partial}_E\sigma_\lambda\otimes e_\lambda, \sum_\mu \tau_\mu\otimes e_\mu}[E]\,dV=\sum_{\lambda, \mu}\int_{U(x_0)}\inn{\bar{\partial}_E\sigma_\lambda,  \tau_\mu}[\mathbb{C}]\inn{e_\lambda,e_\mu}[\mathbb{C}]\,dV\]
and 
\[\int_{U(x_0)}\inn{\bar{\partial}_Es, t}[E]\,dV=\int_{U(x_0)}\inn{s, \bar{\partial}^*_Et}[E]\,dV=\sum_{\lambda,\mu}\int_{U(x_0)}\inn{\sigma_\lambda,\kappa_\mu}[\mathbb{C}]\inn{e_\lambda,e_\mu}[\mathbb{C}]\,dV.\]

	Observe that \[\begin{split}
	    &\int_{U(x_0)}\inn{\bar{\partial}_Es, t}[E]\,dV=\sum_{\lambda, \mu}\int_{U(x_0)}\inn{\bar{\partial}\sigma_\lambda, \inn{e_\mu,e_\lambda}[\mathbb{C}]  \tau_\mu}[\mathbb{C}]\,dV=\sum_{\lambda, \mu}\int_{U(x_0)}\inn{\sigma_\lambda, \bar{\partial}^*\inn{e_\mu,e_\lambda}[\mathbb{C}]  \tau_\mu}[\mathbb{C}]\,dV\\=&\sum_{\lambda, \mu}\int_{U(x_0)}\inn{\sigma_\lambda, (\inn{e_\mu,e_\lambda}[\mathbb{C}])^{-1}\bar{\partial}^*\inn{e_\mu,e_\lambda}[\mathbb{C}]  \tau_\mu}[\mathbb{C}]\inn{e_\lambda,e_\mu}[\mathbb{C}]\,dV\\=&\sum_{\lambda, \mu}\int_{U(x_0)}\inn{\sigma_\lambda\otimes e_\lambda, \left(\inn{e_\mu,e_\lambda}[\mathbb{C}])^{-1}\bar{\partial}^*\inn{e_\mu,e_\lambda}[\mathbb{C}]  \tau_\mu\right)\otimes e_\mu}[E]\,dV,
	\end{split}\] and $\inn{e_\mu,e_\lambda}[\mathbb{C}]=\delta_{\mu\lambda}+O(|z|^2)$ implies $(\inn{e_\mu,e_\lambda}[\mathbb{C}])^{-1}=\delta_{\mu\lambda}+O(|z|^2)$.

 

	Consequently, we obtain that \[\bar{\partial}^*_E t=\sum_{\mu}\bar{\partial}^*_E (\tau_\mu\otimes e_\mu)=\sum_{\mu}(\bar{\partial}^* \tau_\mu)\otimes e_\mu+O(|z|).\]

 Now we verify the identity. Calculate \[\begin{split}
     &[\bar\partial^*_E, L]t=\sum_{\mu}[\bar\partial^*_E, L]\kappa_\mu\otimes e_\mu=\sum_{\mu}\bar\partial^*_E L\kappa_\mu\otimes e_\mu-\sum_{\mu} L\bar\partial^*_E\kappa_\mu\otimes e_\mu\\=&\sum_{\mu}(\bar\partial^* L\kappa_\mu)\otimes e_\mu-\sum_{\mu} (L\bar\partial^*\kappa_\mu)\otimes e_\mu+O(|z|)=\sum_\mu[\bar\partial^*, L]\kappa_\mu\otimes e_\mu+O(|z|)=\i\sum_\mu\partial\kappa_\mu\otimes e_\mu+O(|z|).
 \end{split}\]

 At $x_0$, we find that $[\bar\partial^*_E, L]=\i\partial_E$ which completes the proof.
\end{proof}

 \subsection{Comparison of Laplacians}\label{identityofLap}
 Let $\Box_E=\bar\partial_E\bar\partial_E^*+\bar\partial_E^*\bar\partial_E=[\bar\partial_E,\bar\partial_E^*]$ and $\bar\Box_E=\partial_E\partial_E^*+\partial_E^*\partial_E=[\partial_E,\partial_E^*]$, where the $[\cdot,\cdot]$ denotes the supercommutator, i.e., $[A, B]=AB-(-1)^{\mathrm{deg}(A)\mathrm{deg}(B)}BA$.
\begin{proposition}\label{diffbetweenLap}
    Let $X$ be K\"{a}hler. \[\Box_E-\bar\Box_E=\i[\Theta_E, \Lambda].\]
\end{proposition}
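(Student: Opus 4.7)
The plan is to derive this as a purely formal consequence of the bundle version of the Kähler identity $[\bar\partial_E^*, L] = i\partial_E$ established in the preceding proposition, combined with the graded Jacobi identity. Because everything is local and the Bochner--Kodaira--Nakano identity is pointwise, no completeness or integration by parts is needed beyond what went into the Kähler identity.

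First, I would take the formal adjoint (or equivalently, conjugate/transpose) of $[\bar\partial_E^*, L] = i\partial_E$ to obtain the companion identity $[\Lambda, \partial_E] = i\bar\partial_E^*$, and similarly $[\bar\partial_E, \Lambda] = i\partial_E^*$. These are the versions of the Kähler identities that put $\bar\partial_E^*$ and $\partial_E^*$ in terms of $\Lambda$ and the unadjointed Chern connection pieces; I can quote them as immediate consequences of the proposition since the derivation in the trivial-bundle case (via $[\partial^*, L]u = -i\bar\partial$ etc.) carries over verbatim in normal frames.

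Next I would rewrite
\[
\Box_E = [\bar\partial_E, \bar\partial_E^*] = -i\,[\bar\partial_E, [\Lambda, \partial_E]]
\]
and apply the graded Jacobi identity to the three operators $\bar\partial_E, \Lambda, \partial_E$ of degrees $1, 0, 1$ (mod $2$). This yields
\[
[\bar\partial_E, [\Lambda, \partial_E]] = [\Lambda, [\partial_E, \bar\partial_E]] + [\partial_E, [\bar\partial_E, \Lambda]].
\]
Now the key observation is that in the supercommutator convention, $[\partial_E, \bar\partial_E] = \partial_E \bar\partial + \bar\partial \partial_E$, which is precisely the $(1,1)$-part of $\nabla^2$, i.e.\ the Chern curvature $\Theta_E$. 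And by the companion Kähler identity above, $[\bar\partial_E, \Lambda] = i\partial_E^*$, so $[\partial_E, [\bar\partial_E, \Lambda]] = i[\partial_E, \partial_E^*] = i\bar\Box_E$.

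Substituting back gives
\[
\Box_E = -i[\Lambda, \Theta_E] - i\cdot i\,\bar\Box_E = i[\Theta_E, \Lambda] + \bar\Box_E,
\]
which is the desired identity after transposing $\bar\Box_E$. The main obstacle is purely bookkeeping: confirming the graded Jacobi signs for the mixed-degree triple $(\bar\partial_E, \Lambda, \partial_E)$ and checking that the Kähler identity for the Chern connection, proved above in a normal holomorphic frame, really does produce $[\Lambda, \partial_E] = i\bar\partial_E^*$ with no curvature correction term (the $O(|z|)$ error from the normal frame vanishes at the base point $x_0$, and $x_0 \in X$ is arbitrary). Everything else is algebraic manipulation of graded commutators.
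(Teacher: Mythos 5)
Your proposal is correct and follows essentially the same route as the paper: both expand $\Box_E=[\bar\partial_E,\bar\partial_E^*]=-\i[\bar\partial_E,[\Lambda,\partial_E]]$ via the bundle K\"{a}hler identities and the graded Jacobi identity, identify $[\partial_E,\bar\partial_E]=\Theta_E$, and recognize the remaining term as $\bar\Box_E$. The only cosmetic difference is that the paper computes $\Box_E$ and $\bar\Box_E$ as two separate Jacobi expansions and subtracts, whereas you absorb the $\bar\Box_E$ term directly inside the single expansion of $\Box_E$; the content and the sign bookkeeping are identical.
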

\begin{proof}
    Observe that \[\Box_E=[\bar\partial,\bar\partial_E^*]=-\i[\bar\partial, [\Lambda, \partial_E]]=-\i[\Lambda, [\partial_E,\bar\partial]]-\i[\partial_E, [\bar\partial, \Lambda]]\] and \[\bar\Box_E=[\partial_E,\partial_E^*]=\i[\partial_E,[\Lambda, \bar{\partial}]]=-\i[\partial_E,[\bar{\partial}, \Lambda]].\]

    Consequently, we obtain that \[\Box_E-\bar\Box_E=-\i[\Lambda, [\partial_E,\bar\partial]]=-\i[\Lambda, \Theta_E]=\i[\Theta_E, \Lambda].\]
\end{proof}

Let $E:=X\times\mathbb{C}\rightarrow X$ be a trivial line bundle with fiber metric $e^{-\phi}$. Proposition \ref{diffbetweenLap} gives pointwise, \[\inn{\Box_E u, u}e^{-\phi}-\inn{\bar\Box_Eu, u}e^{-\phi}=\inn{\i[\Theta_E, \Lambda]u,u}e^{-\phi},\] for all $u\in \mathscr{C}_{p,q;c}^\infty(X, \mathbb{C})$.
Integrating the equation above, we obtain that \[\int\inn{\Box_E u, u}e^{-\phi}\,dV-\int\inn{\bar\Box_Eu, u}e^{-\phi}\,dV=\int\inn{\i[\Theta_E, \Lambda]u,u}e^{-\phi}\,dV.\]

For the following, we introduce a special case on which the matters may be simplified. This is motivated by Ma--Marinescu \cite{MM07}. In this case, we only consider $\mathscr{C}_{0,q}^\infty(X, \mathbb{C})$. We can identify $\mathscr{C}_{0,q}^\infty(X, \mathbb{C})$ with $\mathscr{C}_{n,q}^\infty(X, K_X^*)$ by sending $u\in \mathscr{C}_{0,q}^\infty(X, \mathbb{C})$ to $\tilde u:=\omega^1\wedge\cdots\wedge\omega^n u\otimes K_X^*$, where $K_X^*:=\omega_1\wedge\cdots\wedge\omega_n=\det(T^{(1,0)}X)$ is the canonical bundle.

Applying the equation in Proposition \ref{diffbetweenLap} to $\tilde u\in \mathscr{C}_{n,q;c}^\infty(X, K_X^*)$, we obtain that \[\Box_{\tilde E}\tilde u-\bar\Box_{\tilde E}\tilde u=\i[\Theta_{\tilde E}, \Lambda]\tilde u.\] Since $\tilde u\in\mathscr{C}_{n,q;c}^\infty(X, K_X^*)$, we simplify the equation above to $\Box_{\tilde E}\tilde u-\partial_{\tilde E}\partial_{\tilde E}^* \tilde u=\i\Theta_{\tilde E} \Lambda\tilde u$. Consequently, \[\inn{\Box_{\tilde E}\tilde u, \tilde u}-\inn{\partial_{\tilde E}\partial_{\tilde E}^* \tilde u,\tilde u}=\i\inn{\Theta_{\tilde E} \Lambda\tilde u,\tilde u}.\]

When $X$ is a bounded domain with smooth boundary in $\mathbb{C}^n$, the K\"{a}hler metric is $\delta_{ij}$ and $K_X^*=\mathbb{C}$. We obtain that \[\inn{(\bar\partial\bar\partial_E^*+\bar\partial_E^*\bar\partial) u, u}-\inn{\partial_{E}\partial_{E}^* \tilde u,\tilde u}=\i\inn{\Theta_{E} \Lambda \tilde u, \tilde u}.\]

This results in the Morrey-Kohn-H\"{o}rmander's formula.

\begin{theorem}[Morrey-Kohn-H\"{o}rmander's formula]
  Let $\Omega$ be a bounded pseudoconvex domain with smooth boundary in $\mathbb{C}^n$. Let $\phi\in C^2(\overline{\Omega})$ be a function. Then we have that, for arbitrary $u\in\Domain(\bar\partial^*)\cap C^\infty_{(0,q)}(\overline{\Omega})$,
  \[\|\partial^*\tilde u\|_\phi^2=\|\bar\partial u\|_\phi^2+\|\bar\partial^*_\phi u\|^2_\phi-\dinn{\i\Theta_{\phi}\Lambda \tilde u, \tilde u}_\phi-\int_{\partial\Omega}e^{-\phi}\Hessian_\delta(u,u)\,d\sigma\]
\end{theorem}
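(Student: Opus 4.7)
The plan is to derive the identity by integrating the pointwise comparison of Laplacians from Proposition \ref{diffbetweenLap} against $\tilde u$, using the $(0,q)\leftrightarrow(n,q)$ identification introduced in Section \ref{identityofLap}, and then carefully accounting for the boundary contributions produced by integration by parts on the bounded domain $\Omega$.

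First I would set $X=\Omega$ with the flat K\"{a}hler metric and $E$ the trivial line bundle with fiber metric $e^{-\phi}$, so that $K_\Omega^*=\mathbb{C}$ and $\Theta_E=\i\partial\bar\partial\phi=\Theta_\phi$. Given $u\in\Domain(\bar\partial^*)\cap C^\infty_{(0,q)}(\overline\Omega)$, I form $\tilde u=\omega^1\wedge\cdots\wedge\omega^n\otimes u\in\mathscr{C}^\infty_{n,q}(\Omega,K_\Omega^*)$. Because $\Omega$ has complex dimension $n$, every $(n+1,\bullet)$-form vanishes; in particular $\partial_E\tilde u=0$ and $\Theta_E\wedge\tilde u=0$. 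Hence $\bar\Box_E\tilde u=\partial_E\partial_E^*\tilde u$ and $[\Theta_E,\Lambda]\tilde u=\Theta_E\Lambda\tilde u$, so Proposition \ref{diffbetweenLap} collapses to the pointwise identity
$$\Box_E\tilde u-\partial_E\partial_E^*\tilde u=\i\Theta_\phi\Lambda\tilde u.$$

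Next I would pair this identity with $\tilde u$ in the weighted $L^2$ inner product on $\Omega$ and integrate each side by parts. For the term $\dinn{\Box_E\tilde u,\tilde u}_\phi$, the assumption $u\in\Domain(\bar\partial^*)$ supplies exactly the $\bar\partial^*$-boundary condition $\sum_j u_{jJ}\delta_{\bar j}=0$ on $\partial\Omega$, which kills every boundary contribution and yields $\|\bar\partial u\|_\phi^2+\|\bar\partial^*_\phi u\|_\phi^2$. For $\dinn{\partial_E\partial_E^*\tilde u,\tilde u}_\phi$, by contrast, no boundary condition is imposed on $\partial_E^*\tilde u$, so one integration by parts produces $\|\partial_E^*\tilde u\|_\phi^2$ together with a boundary integral that must be identified explicitly.

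Finally I would show that this leftover boundary integral equals $\int_{\partial\Omega}e^{-\phi}\Hessian_\delta(u,u)\,d\sigma$. Working in local components near $\partial\Omega$ with a defining function $\delta$ normalized so that $|\nabla\delta|=1$ on $\partial\Omega$, the boundary term produced by integrating $\partial_E\partial_E^*$ against $\tilde u$ takes the form $\sum_{j,k,J}\delta_{jk}u_{jJ}\bar u_{kJ}$, where the $\bar\partial^*$-boundary condition on $u$ is used to absorb the normal components of $\partial_E^*\tilde u$ into tangential ones modulo terms that vanish on $\partial\Omega$. This sum is precisely $\Hessian_\delta(u,u)$. Substituting the three computations into the integrated identity and rearranging yields the stated formula. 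The main obstacle is this last step: matching the $(n,q)$-form boundary integral produced by $\partial_E\partial_E^*\tilde u$ with the classical Levi-form integral on the $(0,q)$-form $u$, which requires careful bookkeeping of the Hodge star, the interior product, and the $\bar\partial^*$-boundary condition under the $(n,q)\leftrightarrow(0,q)$ identification.
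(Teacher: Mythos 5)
Your overall route---integrating the comparison of Laplacians (Proposition~\ref{diffbetweenLap}) against the $(n,q)$-lift $\tilde u$, using $\partial_E\tilde u=0$ in top bidegree to collapse $\bar\Box_E$ to $\partial_E\partial_E^*$, and then tracking the boundary terms produced by the incomplete metric---is exactly the derivation the paper sketches in Section~\ref{identityofLap}. The gap is in the boundary bookkeeping, and it sits precisely at the step you yourself flag as ``the main obstacle.''

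First, the claim that $u\in\Domain(\bar\partial^*)$ ``kills every boundary contribution'' from $\dinn{\Box_E\tilde u,\tilde u}[\phi]$ is not correct. The condition $(\partial\delta)^\sharp\lrcorner\tilde u=0$ on $\partial\Omega$ does annihilate the boundary integral arising from $\dinn{\bar\partial\bar\partial^*_\phi\tilde u,\tilde u}[\phi]$, but the other half satisfies $\dinn{\bar\partial^*_\phi\bar\partial\tilde u,\tilde u}[\phi]=\|\bar\partial\tilde u\|^2_\phi-\overline{\int_{\partial\Omega}\inn{\bar\partial\delta\wedge\tilde u,\bar\partial\tilde u}[\mathbb{C}]\,e^{-\phi}\,d\sigma}$, and this boundary integral would vanish only if $\bar\partial\tilde u$ again satisfied the $\bar\partial$-Neumann condition, which is not assumed. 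Second, the one boundary term you do retain, namely $\int_{\partial\Omega}\inn{\partial^*\tilde u,(\bar\partial\delta)^\sharp\lrcorner\tilde u}[\mathbb{C}]\,e^{-\phi}\,d\sigma$ from $\dinn{\partial_E\partial_E^*\tilde u,\tilde u}[\phi]$, involves only first derivatives of $\delta$ and of $u$, so it cannot by itself equal $\int_{\partial\Omega}e^{-\phi}\Hessian_\delta(u,u)\,d\sigma$: no second derivatives of $\delta$ are present. The Hessian of $\delta$ appears only when the two surviving boundary integrals are combined and the boundary identity $\sum_j u_{jJ}\,\partial\delta/\partial z_j=0$ is differentiated in directions tangent to $\partial\Omega$ (equivalently, after one more integration by parts along the boundary), which is the classical mechanism in the Morrey--Kohn--H\"ormander computation in Chen--Shaw or Straube. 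Your proposal names this identification but does not carry it out, and with the boundary terms attributed as you have them, the computation would terminate with an unidentified boundary remainder rather than the Levi-form term.
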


\section{Estimates involving two equations}\label{sectionof2eq}

\subsection{Computation involving different weights}

We are going to calculate various operators under different weights. Let $X\times\mathbb{C}\rightarrow X$ be a trivial bundle, where $X$ is a K\"{a}hler manifold with metric $\omega$ (either complete or incomplete). Let $\phi\in\mathscr{C}^\infty_{0,0}(X,\mathbb{R})$. And let the $e^{-\phi}$ be a fiber metric on $X\times\mathbb{C}$. The curvature of the bundle can be computed as \[\i\Theta(X\times\mathbb{C})=\i\partial\bar{\partial}\phi.\]

The operator $\partial_E$ is defined to be \[\partial_E u=e^\phi\partial(e^{-\phi}u)=\partial u-\partial\phi\wedge u.\] 

Under the fiber metric $e^{-\phi}$, we have that $\|u\|^2_{\phi}=\int_{X}|u|^2e^{-\phi}\,dV$ and \[\|\partial_E u\|^2_{\phi}=\int_{X}|e^\phi\partial e^{-\phi}u|^2e^{-\phi}\,dV=\int_{X}|\partial e^{-\phi}u|^2e^{\phi}\,dV=\int_{X}|-\partial \phi\wedge u+\partial u|^2e^{-\phi}\,dV.\]

On the other hand, we calculate \[\|\partial_E u\|^2_{\phi}=\int_{X}|e^{\frac{\phi}{2}}\partial e^{-\frac{\phi}{2}}e^{-\frac{\phi}{2}}u|^2\,dV.\] Let $v=e^{-\frac{\phi}{2}}u$, and we get \[\|\partial_E u\|^2_{\phi}=\int_{X}|e^{\frac{\phi}{2}}\partial e^{-\frac{\phi}{2}}v|^2\,dV=\int_{X}|-\frac{1}{2}\partial\phi\wedge v+\partial v|^2\,dV.\]

The $\partial^*_E$ is independent of $\phi$ (because the weight has been taken care of by $\partial_E$.) and so we will write $\partial^*$ instead of $\partial^*_E$. Note \[\|\partial_E^*u\|_\phi=\int_{X}|\partial^* u|^2e^{-\phi}\,dV=\int_{X}|\partial^* e^{\frac{\phi}{2}}v|^2e^{-\phi}\,dV.\]

We compute $\partial^*$ in geodesic coordinate, letting $t=\sum_{K,L}t_{KL}dz_K\wedge d\bar{z}_L$: \[\partial^* t=-\sum_{K,L,k}\frac{\partial t_{KL}}{\partial \bar{z}_k}\frac{\partial}{\partial z_k}\lrcorner dz_K\wedge d\bar{z}_L+O(|z|).\] We observe that \[\begin{split}
	&\partial^*(e^\phi t)=-\sum_{K,L,k}\frac{\partial e^\phi t_{KL}}{\partial \bar{z}_k}\frac{\partial}{\partial z_k}\lrcorner dz_K\wedge d\bar{z}_L+O(|z|)\\=&-\sum_{K,L,k}\left(e^\phi\frac{\partial  t_{KL}}{\partial \bar{z}_k}+t_{KL}e^\phi \frac{\partial \phi }{\partial \bar{z}_k}\right)\frac{\partial}{\partial z_k}\lrcorner dz_K\wedge d\bar{z}_L+O(|z|)\\
	=&e^\phi\partial^*t-e^\phi\sum_{K,L,k}t_{KL} \frac{\partial \phi }{\partial \bar{z}_k}\frac{\partial}{\partial z_k}\lrcorner dz_K\wedge d\bar{z}_L+O(|z|).
\end{split}\]

Compute $\bar{\partial}^*$ in geodesic coordinates (see Demailly \cite{De12} Page 305). Observe that letting $s=\sum_{I,J} s_{IJ}dz_I\wedge d\bar{z}_J$ and $t=\sum_{K,L}t_{KL}dz_K\wedge d\bar{z}_L$, \[\int_{U(x_0)}\inn{\bar{\partial}s, t}[\mathbb{C}]\,dV=\sum_{I,J,k,K,L}\int_{U(x_0)}\inn{\frac{\partial s_{IJ}}{\partial\bar{z}_i}d\bar{z}_i\wedge dz_I\wedge d\bar{z}_J, t_{KL}dz_K\wedge d\bar{z}_L}_\mathbb{C}(1+O(|z|^2))\,dV_0.\] Consequently, \[\bar{\partial}^* t=-\sum_{K,L,k}\frac{\partial t_{KL}}{\partial z_k}\frac{\partial}{\partial \bar{z}_k}\lrcorner dz_K\wedge d\bar{z}_L+O(|z|).\]

\begin{definition}
	Assume $(dz_i)$ is an orthonormal basis at $x_0\in X$. Let $u=\sum_{i}u_{i}dz_i\in (T^{(1,0)}X)^*$, we define $u^\sharp=\sum_{i}u_{i}\frac{\partial}{\partial \bar{z}_i}\in T^{(0,1)}X$. 
\end{definition}
The definition enjoys the property: $u(Y)=\inn{u^\sharp, \overline{Y}}[\mathbb{C}]$ and $\inn{\alpha, \gamma^\sharp \lrcorner \beta}[\mathbb{C}]=\inn{\bar{\gamma}\wedge\alpha,\beta}[\mathbb{C}]$. The second equation is proved below.
\begin{lemma}
 \[\inn{\alpha, \gamma^\sharp \lrcorner \beta}[\mathbb{C}]=\inn{\bar{\gamma}\wedge\alpha,\beta}[\mathbb{C}].\]
\end{lemma}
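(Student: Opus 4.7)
The plan is to reduce the stated identity to a single basis-vector case by sesquilinearity, and then verify that case by a direct computation in an orthonormal frame at $x_0$.

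Writing $\gamma=\sum_j\gamma_j\,dz_j$, the definition in the paper gives $\gamma^\sharp=\sum_j\gamma_j\,\frac{\partial}{\partial\bar z_j}$ and $\bar\gamma=\sum_j\bar\gamma_j\,d\bar z_j$. Because $\inn{\cdot,\cdot}[\mathbb{C}]$ is $\mathbb{C}$-linear in the first slot and conjugate-linear in the second, both sides of the asserted identity expand as
\[
\inn{\alpha,\gamma^\sharp\lrcorner\beta}[\mathbb{C}]=\sum_j\bar\gamma_j\,\inn{\alpha,\tfrac{\partial}{\partial\bar z_j}\lrcorner\beta}[\mathbb{C}],\qquad \inn{\bar\gamma\wedge\alpha,\beta}[\mathbb{C}]=\sum_j\bar\gamma_j\,\inn{d\bar z_j\wedge\alpha,\beta}[\mathbb{C}].
\]
Matching the two expansions term by term reduces the lemma to showing, for each index $j$,
\[
\inn{\alpha,\tfrac{\partial}{\partial\bar z_j}\lrcorner\beta}[\mathbb{C}]=\inn{d\bar z_j\wedge\alpha,\beta}[\mathbb{C}].
\]

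Next I would expand $\alpha$ and $\beta$ in the orthonormal basis $\{dz_I\wedge d\bar z_J\}$ of $(p,q)$-forms at $x_0$; by sesquilinearity again it suffices to verify the reduced identity when $\alpha=dz_I\wedge d\bar z_J$ and $\beta=dz_K\wedge d\bar z_L$ for multi-indices of appropriate lengths. A direct computation shows that $d\bar z_j\wedge\alpha$ vanishes unless $j\notin J$, while $\frac{\partial}{\partial\bar z_j}\lrcorner\beta$ vanishes unless $j\in L$. Consequently both sides of the reduced identity can be nonzero only in the common case $I=K$ and $L=J\cup\{j\}$.

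The only step that is not entirely formal is a Koszul sign comparison: the sign produced by wedging $d\bar z_j$ on the left of $d\bar z_J$ and reordering to the sorted $d\bar z_{J\cup\{j\}}$ must match the sign produced by contracting $\frac{\partial}{\partial\bar z_j}$ into $d\bar z_L$ and reading off the coefficient of $d\bar z_{L\setminus\{j\}}$. This is the classical assertion that, on the Hermitian exterior algebra, exterior multiplication by a unit covector is adjoint to interior multiplication by its dual vector; verifying it is pure permutation bookkeeping and does not depend on the Kähler structure. Once the sign match is confirmed on monomials, recombining via the sesquilinear expansion of the first step yields the full identity in the lemma.
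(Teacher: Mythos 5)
Your proposal is correct and follows essentially the same route as the paper: expand everything in an orthonormal frame at $x_0$, reduce to monomials, and match terms via an index shift, with the only nontrivial content being the sign bookkeeping for wedge versus contraction. The paper handles that sign step implicitly by using antisymmetric coefficient conventions (writing $\bar{\beta}_{I(iJ')}$ for the unsorted multi-index), whereas you defer it as routine permutation bookkeeping; either way the substance of the argument is identical.
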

\begin{proof}
	Choose the geodesic coordinates $(z_i)$ at $x_0\in X$. Let $\alpha=\sum_{I,J}\alpha_{IJ}dz_I\wedge d\bar{z}_J$, $\beta=\sum_{K,L}\beta_{KL}dz_K\wedge d\bar{z}_L$ and $\gamma=\sum_i\gamma_idz_i$. At $x_0$, we have that  $\gamma^\sharp=\sum_i\gamma_i\frac{\partial}{\partial \bar{z}_i}$. We observe that $\inn{\alpha, \gamma^\sharp \lrcorner \beta}[\mathbb{C}]=\sum_{i,I,i\in J}\alpha_{I(J\backslash\lbrace i\rbrace)}\bar{\gamma}_i\bar{\beta}_{IJ}$. Denote $J\backslash\lbrace i\rbrace$ by $J'$, we shift index and obtain that $\inn{\alpha, \gamma^\sharp \lrcorner \beta}[\mathbb{C}]=\sum_{i,I,i\notin J'}\alpha_{IJ'}\bar{\gamma}_i\bar{\beta}_{I(iJ')}$. Since $\inn{\bar{\gamma}\wedge\alpha,\beta}[\mathbb{C}]=\sum_{i,I,i\notin J}\bar\gamma_i\alpha_{IJ}\bar{\beta}_{I(iJ)}$, we complete the proof.
\end{proof}

\begin{remark}
Indeed, the $\sharp$ operator depends on the K\"{a}hler metric.
\end{remark}

With this definition, we have the following lemma.

\begin{lemma}\label{calcwithmultiplier}
	\[\bar{\partial}^* e^\phi t=e^\phi\left(\bar{\partial}^*t-(\partial\phi)^\sharp\lrcorner t\right)\] Similarly, \[\partial^* e^\phi t=e^\phi\left(\partial^*t-(\bar\partial\phi)^\sharp\lrcorner t\right)\]
\end{lemma}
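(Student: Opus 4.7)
The plan is to mimic the calculation already carried out in the excerpt for $\partial^\ast(e^\phi t)$, but with the roles of $z_k$ and $\bar z_k$ swapped, and then recognize the correction term as a contraction with $(\partial\phi)^\sharp$.

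First, I fix an arbitrary point $x_0\in X$ and work in the geodesic coordinate system $(z_i)$ at $x_0$ introduced earlier. From the local formula already derived in this section,
\[
\bar{\partial}^* t = -\sum_{K,L,k}\frac{\partial t_{KL}}{\partial z_k}\frac{\partial}{\partial\bar{z}_k}\lrcorner(dz_K\wedge d\bar z_L) + O(|z|),
\]
I apply the very same formula with $t$ replaced by $e^\phi t$. By the Leibniz rule,
\[
\frac{\partial(e^\phi t_{KL})}{\partial z_k} = e^\phi\frac{\partial t_{KL}}{\partial z_k} + e^\phi\frac{\partial\phi}{\partial z_k} t_{KL},
\]
so plugging in and factoring $e^\phi$ gives
\[
\bar{\partial}^*(e^\phi t) = e^\phi\bar{\partial}^* t - e^\phi \sum_{K,L,k} t_{KL}\frac{\partial\phi}{\partial z_k}\frac{\partial}{\partial\bar z_k}\lrcorner(dz_K\wedge d\bar z_L) + O(|z|),
\]
which is exactly the analogue of the formula the excerpt obtained for $\partial^\ast(e^\phi t)$.

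Next, I identify the sum as a contraction. Since $\partial\phi=\sum_k(\partial\phi/\partial z_k)\,dz_k$, the definition of $\sharp$ (applicable because $(dz_i)$ is orthonormal at $x_0$ in the geodesic frame) gives $(\partial\phi)^\sharp=\sum_k(\partial\phi/\partial z_k)\,\partial/\partial\bar z_k$. Consequently,
\[
(\partial\phi)^\sharp\lrcorner t = \sum_{K,L,k} t_{KL}\frac{\partial\phi}{\partial z_k}\frac{\partial}{\partial\bar z_k}\lrcorner(dz_K\wedge d\bar z_L),
\]
and the desired identity $\bar{\partial}^*(e^\phi t)=e^\phi(\bar{\partial}^* t - (\partial\phi)^\sharp\lrcorner t)$ holds at $x_0$ up to $O(|z|)$. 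Evaluating at $z=x_0$ kills the error, and since $x_0$ was arbitrary the identity holds pointwise on $X$.

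For the second formula $\partial^*(e^\phi t) = e^\phi(\partial^* t - (\bar\partial\phi)^\sharp\lrcorner t)$, the computation already displayed in the excerpt gives the first step verbatim, so only the second (identification) step is new: with $\bar\partial\phi=\sum_k(\partial\phi/\partial\bar z_k)\,d\bar z_k$, one has $(\bar\partial\phi)^\sharp=\sum_k(\partial\phi/\partial\bar z_k)\,\partial/\partial z_k$ (extending the $\sharp$ operator to $(0,1)$-covectors in the natural way), and the extra sum matches $(\bar\partial\phi)^\sharp\lrcorner t$. There is no genuine obstacle here; the only thing to be slightly careful about is that the $\sharp$ formula is a pointwise identity in the geodesic frame at $x_0$, so one must remember that all tensorial identifications are carried out after evaluation at $x_0$ where the metric is $\delta_{ij}$, and then globalized by arbitrariness of $x_0$.
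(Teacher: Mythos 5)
Your proposal is correct and follows essentially the same route as the paper's proof: geodesic coordinates at an arbitrary $x_0$, the Leibniz rule applied to $\partial(e^\phi t_{KL})/\partial z_k$, identification of the extra sum with $(\partial\phi)^\sharp\lrcorner t$, and evaluation at $x_0$ to discard the $O(|z|)$ error. Your added remarks --- spelling out why the correction term equals the contraction, and noting that the second identity needs $\sharp$ extended to $(0,1)$-covectors --- are details the paper leaves implicit, but they do not change the argument.
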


\begin{proof}
	Take geodesic coordinates $(z_i)$ at $x_0\in X$. Observe that \[\begin{split}
		&\bar{\partial}^* e^\phi t\\=&-\sum_{K,L,k}\frac{\partial e^\phi t_{KL}}{\partial z_k}\frac{\partial}{\partial \bar{z}_k}\lrcorner dz_K\wedge d\bar{z}_L+O(|z|)\\
		=&-\sum_{K,L,k} e^\phi \frac{\partial t_{KL}}{\partial z_k}\frac{\partial}{\partial \bar{z}_k}\lrcorner dz_K\wedge d\bar{z}_L- \sum_{K,L,k}e^\phi t_{KL}\frac{\partial\phi}{\partial z_k}\frac{\partial}{\partial \bar{z}_k}\lrcorner dz_K\wedge d\bar{z}_L+O(|z|).
	\end{split}\]

Restrict at $x_0$, we obtain that \[\bar{\partial}^* e^\phi t=e^\phi\left(\bar{\partial}^*t-(\partial\phi)^\sharp\lrcorner t\right).\] Since $x_0\in X$ is arbitrary, the equation is proved.
\end{proof}

Consequently, we have that \[e^\phi\bar{\partial}^* e^{-\phi} u=\bar{\partial}^*u+(\partial\phi)^\sharp\lrcorner u\] and letting $v=e^{-\frac{\phi}{2}}u$, \[|e^\phi\bar{\partial}^* e^{-\phi} u|^2_\mathbb{C}e^{-\phi}=|\bar{\partial}^*u+(\partial\phi)^\sharp\lrcorner u|^2_\mathbb{C}e^{-\phi}=|\bar{\partial}^*v+\frac{1}{2}(\partial\phi)^\sharp\lrcorner v|^2_\mathbb{C}.\]

\subsection{The case without boundary (complete metric on $X$)}

In this section, the $\Domain(\bar\partial*)$ and $\Domain(\bar\partial)$ are defined as in the book of Chen--Shaw \cite{CS01} and Straube \cite{St10}.

By the Bochner--Kodaira--Nakano identity, we compute pointwise \begin{equation}\label{11}
    \bar\partial\bar\partial^*_\phi u+\bar\partial^*_\phi\bar\partial u=\partial_\phi\partial^*u+\partial^*\partial_\phi u+\i[\Theta_{\phi},\Lambda] u
\end{equation} and \[\bar\partial\bar\partial^*_{\phi+\eta} u+\bar\partial^*_{\phi+\eta}\bar\partial u=\partial_{\phi+\eta}\partial^*u+\partial^*\partial_{\phi+\eta}u+\i[\Theta_{\phi+\eta},\Lambda] u.\] The second equation can be simplified to \[
    \bar\partial e^\eta\bar\partial^*_{\phi} e^{-\eta}u+e^\eta\bar\partial^*_{\phi}e^{-\eta}\bar\partial u=e^\eta\partial_{\phi}e^{-\eta}\partial^*u+\partial ^*e^\eta\partial_\phi e^{-\eta}u+\i[\Theta_{\phi+\eta},\Lambda] u\] or equivalently, \begin{equation}\label{22}
        \bar\partial (\bar\partial^*_\phi u+(\partial\eta)^\sharp\lrcorner u)+\bar\partial^*_\phi\bar\partial u+(\partial\eta)^\sharp\lrcorner \bar\partial u=\partial_\phi\partial^*u-\partial\eta\wedge\partial^* u+\partial ^*(\partial_\phi u-\partial
    \eta\wedge u)+\i[\Theta_{\phi+\eta},\Lambda] u.
    \end{equation}

Consequently, taking a difference of the two equations (\ref{11}) and (\ref{22}), \[\bar\partial(\partial\eta)^\sharp\lrcorner u+(\partial\eta)^\sharp\lrcorner \bar\partial u=-\partial\eta\wedge\partial^* u-\partial^*\partial\eta\wedge u+\i[\Theta_{\eta},\Lambda] u.\]

Then we obtain the so-called basic equation in the note:

\begin{proposition}[The basic equation]\label{basicequation}
    \[\inn{\bar\partial(\partial\eta)^\sharp\lrcorner u, u}_\mathbb{C}e^{-\phi}+\inn{(\partial\eta)^\sharp\lrcorner \bar\partial u,u}_\mathbb{C}e^{-\phi}=-\inn{\partial\eta\wedge\partial^* u,u}_\mathbb{C}e^{-\phi}-\inn{\partial^*\partial
    \eta\wedge u,u}[\mathbb{C}]e^{-\phi}+\i\inn{[\Theta_{\eta},\Lambda] u,u}_\mathbb{C}e^{-\phi}.\]
\end{proposition}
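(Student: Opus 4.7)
The plan is to derive the basic equation by applying the Bochner--Kodaira--Nakano identity in two different weight settings and subtracting. Concretely, I would invoke the pointwise identity (\ref{11}) once for the weight $\phi$ and a second time for the weight $\phi+\eta$. Since the Chern curvature of the trivial line bundle with fiber metric $e^{-\phi-\eta}$ is $\i\partial\bar\partial(\phi+\eta)$, the difference of the two curvature terms collapses exactly to $\i[\Theta_\eta,\Lambda]u$ with $\Theta_\eta=\i\partial\bar\partial\eta$, which is precisely the term we want to isolate on the right-hand side.

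To make the two identities directly comparable, the key preparatory step is to rewrite the operators at weight $\phi+\eta$ in terms of the operators at weight $\phi$. Using Lemma \ref{calcwithmultiplier} (applied with $\eta$ in place of $\phi$ on top of the already-present weight $\phi$), together with the fact noted just before that lemma that $\partial^*$ is independent of the fiber weight, one obtains the clean substitution rules
\[
\bar\partial^*_{\phi+\eta}u=\bar\partial^*_\phi u+(\partial\eta)^\sharp\lrcorner u,\qquad \partial_{\phi+\eta}u=\partial_\phi u-\partial\eta\wedge u.
\]
Plugging these into the BKN identity at weight $\phi+\eta$ yields exactly the expanded form (\ref{22}). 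Subtracting (\ref{11}) from (\ref{22}), the pure $\bar\partial\bar\partial^*_\phi$, $\bar\partial^*_\phi\bar\partial$, $\partial_\phi\partial^*$ and $\partial^*\partial_\phi$ contributions cancel; the two curvature terms combine into $\i[\Theta_\eta,\Lambda]u$; and what remains is precisely the pointwise operator identity displayed immediately before the statement of the proposition.

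The last step is entirely mechanical: take the pointwise Hermitian pairing of both sides with $u$ and multiply by $e^{-\phi}$. The only genuine obstacle in the whole argument is the algebraic bookkeeping in the expansion --- specifically, tracking the placement of the $\eta$-correction inside the composition $\partial_{\phi+\eta}\partial^*u+\partial^*\partial_{\phi+\eta}u$, which is what separately produces the terms $-\partial\eta\wedge\partial^*u$ and $-\partial^*(\partial\eta\wedge u)$ on the right-hand side, and keeping straight that $\partial^*$ carries no $\eta$-correction while $\bar\partial^*_\phi$ and $\partial_\phi$ do. Once that bookkeeping is handled correctly, no further analytic input is required.
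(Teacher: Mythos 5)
Your proposal is correct and follows essentially the same route as the paper: the paper likewise applies the Bochner--Kodaira--Nakano identity at the weights $\phi$ and $\phi+\eta$, rewrites the latter via $\bar\partial^*_{\phi+\eta}u=\bar\partial^*_\phi u+(\partial\eta)^\sharp\lrcorner u$ and $\partial_{\phi+\eta}u=\partial_\phi u-\partial\eta\wedge u$ to obtain (\ref{22}), subtracts (\ref{11}), and then pairs with $u$ against $e^{-\phi}$. No gap.
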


\begin{lemma}\label{sharpwedge}
	\[|(\partial\eta)^\sharp\lrcorner v|^2_\mathbb{C}=-|\bar{\partial}\eta\wedge v|^2_\mathbb{C}+|\bar\partial\eta|^2_\mathbb{C}|v|^2_\mathbb{C}.\]
\end{lemma}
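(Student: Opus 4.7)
The plan is to reduce the claim to a pointwise identity and then exploit the fact that, by the preceding lemma, the operators $\bar\partial\eta\wedge$ and $(\partial\eta)^\sharp\lrcorner$ are mutually adjoint with respect to the pointwise Hermitian inner product $\inn{\cdot,\cdot}[\mathbb{C}]$. Since both sides of the target identity are pointwise quantities, it suffices to work at an arbitrary $x_0\in X$ in a geodesic coordinate system $(z_i)$ in which $\{dz_i\}$ and $\{d\bar z_i\}$ are orthonormal. Writing $\partial\eta=\sum_i\eta_i\,dz_i$, we then have $\bar\partial\eta=\sum_i\bar\eta_i\,d\bar z_i$ and $(\partial\eta)^\sharp=\sum_i\eta_i\,\partial/\partial\bar z_i$, so $|\bar\partial\eta|^2_{\mathbb{C}}=\sum_i|\eta_i|^2=\bar\partial\eta\big((\partial\eta)^\sharp\big)$.

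The key algebraic input is the anti-derivation property of the interior product by a vector field: for any $1$-form $\omega$ and any form $v$,
\[
\iota_X(\omega\wedge v)=\omega(X)\,v-\omega\wedge(\iota_X v).
\]
Specializing to $X=(\partial\eta)^\sharp$ and $\omega=\bar\partial\eta$ and using the computation above gives the pointwise anti-commutation identity
\[
(\partial\eta)^\sharp\lrcorner(\bar\partial\eta\wedge v)+\bar\partial\eta\wedge\big((\partial\eta)^\sharp\lrcorner v\big)=|\bar\partial\eta|^2_{\mathbb{C}}\,v.
\]

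Now pair both sides with $v$ under $\inn{\cdot,\cdot}[\mathbb{C}]$ at $x_0$. By the preceding lemma, applied once in each direction, the first term on the left becomes $\inn{\bar\partial\eta\wedge v,\bar\partial\eta\wedge v}[\mathbb{C}]=|\bar\partial\eta\wedge v|^2_{\mathbb{C}}$ and the second term becomes $\inn{(\partial\eta)^\sharp\lrcorner v,(\partial\eta)^\sharp\lrcorner v}[\mathbb{C}]=|(\partial\eta)^\sharp\lrcorner v|^2_{\mathbb{C}}$. Rearranging yields exactly $|(\partial\eta)^\sharp\lrcorner v|^2_{\mathbb{C}}=|\bar\partial\eta|^2_{\mathbb{C}}|v|^2_{\mathbb{C}}-|\bar\partial\eta\wedge v|^2_{\mathbb{C}}$, and since $x_0$ is arbitrary, the identity holds globally.

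There is no genuine obstacle: the whole content is the CAR-type relation $\{\varepsilon,\iota\}=|\bar\partial\eta|^2$ between wedging by $\bar\partial\eta$ and contracting by $(\partial\eta)^\sharp$, combined with their adjointness. The only item that requires any care is the sign in the anti-derivation rule, but since $(\partial\eta)^\sharp$ is an ordinary (odd) vector field and $\bar\partial\eta$ is a $1$-form, the standard Leibniz sign is the one above and no additional factors intrude.
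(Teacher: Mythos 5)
Your proof is correct and is essentially the paper's own argument: both rest on exactly two ingredients, the adjointness $\inn{\alpha,\gamma^\sharp\lrcorner\beta}[\mathbb{C}]=\inn{\bar\gamma\wedge\alpha,\beta}[\mathbb{C}]$ from the preceding lemma and the anti-derivation (Leibniz) rule for the interior product. The only difference is organizational — you state the anticommutation relation $(\partial\eta)^\sharp\lrcorner(\bar\partial\eta\wedge v)+\bar\partial\eta\wedge\bigl((\partial\eta)^\sharp\lrcorner v\bigr)=|\bar\partial\eta|^2_{\mathbb{C}}v$ first and then pair with $v$, while the paper starts from $|(\partial\eta)^\sharp\lrcorner v|^2_{\mathbb{C}}$, applies adjointness, and then invokes the Leibniz rule.
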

\begin{proof}
		Observe that \[\inn{(\partial\eta)^\sharp\lrcorner v, (\partial\eta)^\sharp\lrcorner v}[\mathbb{C}]=\inn{\bar{\partial}\eta\wedge (\partial\eta)^\sharp\lrcorner v, v}[\mathbb{C}].\] By the Leibniz rule: \[\bar{\partial}\eta\wedge (\partial\eta)^\sharp\lrcorner v=-(\partial\eta)^\sharp\lrcorner \bar{\partial}\eta\wedge v+\left((\partial\eta)^\sharp\lrcorner\bar{\partial}\eta\right)\wedge  v,\] we have that  \[
			\inn{(\partial\eta)^\sharp\lrcorner v, (\partial\eta)^\sharp\lrcorner v}[\mathbb{C}]=\inn{-(\partial\eta)^\sharp\lrcorner \bar{\partial}\eta\wedge v+\left((\partial\eta)^\sharp\lrcorner\bar{\partial}\eta\right)\wedge  v, v}[\mathbb{C}]=-|\bar{\partial}\eta\wedge v|^2_\mathbb{C}+|\bar\partial\eta|^2_\mathbb{C}|v|^2_\mathbb{C}.
		\]
\end{proof}


Note that the Bochner--Kodaira--Nakano identity gives the following well-known equation:
\begin{proposition}\label{BKNnob}
    Let $E:=X\times\mathbb{C}\rightarrow X$ be a trivial line bundle with fiber metric $e^{-\phi}$. 
We have that \[\|\bar{\partial} u\|_\phi+\|\bar{\partial}^*_\phi u\|_\phi-\|\partial_\phi u\|_\phi-\|\partial^* u\|_\phi=\dinn{\i[\Theta_\phi,\Lambda]u,u}[\phi],\] for all $u\in \mathscr{C}^\infty_{p,q;c}(X, \mathbb{C})$ 
\end{proposition}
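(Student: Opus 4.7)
The statement is essentially an integrated version of the operator identity $\Box_E - \bar\Box_E = \i[\Theta_E, \Lambda]$ already established in Proposition \ref{diffbetweenLap}, specialized to the trivial line bundle $E = X\times\mathbb{C}$ with fiber metric $e^{-\phi}$ (for which $\i\Theta_E = \i\partial\bar\partial\phi = \i\Theta_\phi$, as computed at the start of Section \ref{sectionof2eq}). The $\|\cdot\|$'s in the statement should be read as $\|\cdot\|^2$; this is the standard Bochner--Kodaira--Nakano norm identity.

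The plan is to start from the pointwise/operator identity $[\bar\partial,\bar\partial^*_\phi] - [\partial_\phi,\partial^*] = \i[\Theta_\phi,\Lambda]$, which is precisely Proposition \ref{diffbetweenLap} with $E = X\times\mathbb{C}$ and the notational conventions of Section \ref{sectionof2eq} (writing $\bar\partial^*_\phi$ for $\bar\partial^*_E$ and $\partial^*$ for the $\phi$-independent adjoint $\partial^*_E$). Applying both sides to $u \in \mathscr{C}^\infty_{p,q;c}(X,\mathbb{C})$, taking the pointwise Hermitian inner product with $u$, multiplying by $e^{-\phi}$, and integrating against $dV$ gives
\begin{equation*}
\dinn{\bar\partial\bar\partial^*_\phi u, u}_\phi + \dinn{\bar\partial^*_\phi\bar\partial u, u}_\phi - \dinn{\partial_\phi\partial^* u, u}_\phi - \dinn{\partial^*\partial_\phi u, u}_\phi = \dinn{\i[\Theta_\phi,\Lambda]u, u}_\phi.
\end{equation*}

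Next I would invoke the compactness of the support of $u$ to justify integration by parts with no boundary contribution. Using the weighted adjoint relations $\dinn{\bar\partial v, w}_\phi = \dinn{v, \bar\partial^*_\phi w}_\phi$ and $\dinn{\partial_\phi v, w}_\phi = \dinn{v, \partial^* w}_\phi$ (the latter being the defining relation that motivates absorbing $e^{-\phi}$ into $\partial_\phi$, so that $\partial^*$ is $\phi$-independent, as highlighted in Section \ref{sectionof2eq}), each of the four terms on the left becomes a squared norm:
\begin{equation*}
\dinn{\bar\partial\bar\partial^*_\phi u, u}_\phi = \|\bar\partial^*_\phi u\|^2_\phi, \quad \dinn{\bar\partial^*_\phi\bar\partial u, u}_\phi = \|\bar\partial u\|^2_\phi,
\end{equation*}
and analogously for the $\partial$-terms. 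Substituting yields the claimed identity.

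There is essentially no obstacle here: the operator identity of Proposition \ref{diffbetweenLap} does all the analytic work, and the passage from operator identity to norm identity is just integration by parts, which is unconditional because $u$ has compact support (so the completeness of $X$ plays no role at this stage, explaining why the proposition is labeled the ``case without boundary''). The only point that merits a brief remark is that, although $\partial^*$ carries no subscript $\phi$, it is still the weighted adjoint in the correct sense because the weight $e^{-\phi}$ has been absorbed into the definition of $\partial_\phi = \partial_E = e^{\phi}\partial e^{-\phi}$, as verified in the computations preceding Lemma \ref{calcwithmultiplier}.
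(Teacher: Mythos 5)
Your proof is correct and is essentially the paper's own route: the paper states this proposition without a separate proof, but the integration of the pointwise identity of Proposition \ref{diffbetweenLap} over $X$ against the weight $e^{-\phi}$ is carried out verbatim in Section \ref{identityofLap}, and your conversion of the four Laplacian terms into squared norms via compact support is exactly the intended (and valid) final step. Your reading of the left-hand side as squared norms, and your remark that $\partial^*$ is the correct $\phi$-adjoint of $\partial_\phi$ because the weight is absorbed into $\partial_\phi=e^{\phi}\partial e^{-\phi}$, are both consistent with how the proposition is used later in the paper.
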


We will imporve the proposition above by the following proposition. 

\begin{proposition}
Let $X\times\mathbb{C}\rightarrow X$ be a trivial line bundle with fiber metric $e^{-\phi}$. If there exists $t>0$ such that $\i[\Theta_{\eta+t\phi}, \Lambda]-t^{-1}|\bar{\partial}\eta|^2_\mathbb{C}$ positive definite, we have the following \textit{a priori} estimate \[C\|\bar{\partial}v\|_\phi^2+C\|\bar{\partial}^*_\phi v\|^2_\phi\geq\|v\|_\phi^2,\] for all $v\in \mathscr{C}^\infty_{p,q;c}(X, \mathbb{C})$.
\end{proposition}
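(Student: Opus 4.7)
The plan is to combine the Bochner--Kodaira--Nakano identity of Proposition \ref{BKNnob}, scaled by the parameter $t$, with the integrated basic equation of Proposition \ref{basicequation}, and then apply Cauchy--Schwarz to absorb all cross-terms into the desired right-hand side $\|\bar\partial v\|_\phi^2 + \|\bar\partial^*_\phi v\|_\phi^2$ together with a multiple of $\dinn{|\bar\partial\eta|^2 v,v}_\phi$ on the left.

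First I multiply Proposition \ref{BKNnob} by $t$ to obtain
\[
t\|\bar\partial v\|_\phi^2 + t\|\bar\partial^*_\phi v\|_\phi^2 - t\|\partial_\phi v\|_\phi^2 - t\|\partial^* v\|_\phi^2 = \dinn{\i[\Theta_{t\phi},\Lambda]v,v}_\phi,
\]
using $\Theta_{t\phi}=t\Theta_\phi$ so that this curvature contribution will combine with the basic equation to produce the target $\i[\Theta_{\eta+t\phi},\Lambda]$. Then I integrate Proposition \ref{basicequation} and use the adjoint identities $(\gamma^\sharp\lrcorner)^* = \bar\gamma\wedge$ (and its conjugate) to rewrite each of the four bilinear terms as a pairing of one of $\bar\partial^*_\phi v$, $\bar\partial v$, $\partial^* v$, $\partial_\phi v$ against one of the four zeroth order expressions $(\partial\eta)^\sharp\lrcorner v$, $\bar\partial\eta\wedge v$, $(\bar\partial\eta)^\sharp\lrcorner v$, $\partial\eta\wedge v$. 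Adding the two equations collapses the curvature terms into $\dinn{\i[\Theta_{\eta+t\phi},\Lambda]v,v}_\phi$.

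Next I apply Cauchy--Schwarz to each of the four cross-terms, splitting the product at ratio $t : 1/t$, so that the derivative side contributes $\tfrac{t}{2}\|\cdot\|_\phi^2$ and the $\eta$ side contributes $\tfrac{1}{2t}\|\cdot\|_\phi^2$. Lemma \ref{sharpwedge} gives $|(\partial\eta)^\sharp\lrcorner v|^2 + |\bar\partial\eta\wedge v|^2 = |\bar\partial\eta|^2 |v|^2$, and its conjugate analogue (valid since $\eta$ is real and $|\partial\eta|^2 = |\bar\partial\eta|^2$) yields the same bound for the remaining pair. Hence the sum of the four $\eta$-side pieces is controlled by $t^{-1}\dinn{|\bar\partial\eta|^2 v,v}_\phi$. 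The derivative-side pieces total $\tfrac{t}{2}\bigl(\|\bar\partial v\|_\phi^2 + \|\bar\partial^*_\phi v\|_\phi^2 + \|\partial_\phi v\|_\phi^2 + \|\partial^* v\|_\phi^2\bigr)$, and after combining with the signed $\pm t$ factors already present in the scaled BKN identity, the $\|\partial_\phi v\|_\phi^2$ and $\|\partial^* v\|_\phi^2$ contributions acquire the nonpositive coefficient $-t/2$ and may be discarded.

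The resulting inequality is
\[
\dinn{\bigl(\i[\Theta_{\eta+t\phi},\Lambda] - t^{-1}|\bar\partial\eta|^2\bigr)v,v}_\phi \leq \tfrac{3t}{2}\bigl(\|\bar\partial v\|_\phi^2 + \|\bar\partial^*_\phi v\|_\phi^2\bigr),
\]
and the positive-definiteness hypothesis supplies a uniform $\lambda>0$ with the left-hand side at least $\lambda\|v\|_\phi^2$, yielding the claim with $C = 3t/(2\lambda)$. The main obstacle I anticipate is the sign bookkeeping in the basic equation and the careful pairing of the four bilinear cross-terms with the four norms in the scaled BKN identity, so that the sign-ambiguous norms $\|\partial_\phi v\|_\phi^2$ and $\|\partial^* v\|_\phi^2$ end up with the correct coefficient to be absorbed; once that matching is in place the remaining Cauchy--Schwarz step and the invocation of Lemma \ref{sharpwedge} are routine.
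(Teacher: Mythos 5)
Your proposal is correct and follows essentially the same route as the paper: it combines the integrated basic equation (Proposition \ref{basicequation}) with the Bochner--Kodaira--Nakano identity of Proposition \ref{BKNnob} scaled by $t$, absorbs the four cross-terms by Cauchy--Schwarz so that $\|\partial_\phi v\|_\phi^2$ and $\|\partial^* v\|_\phi^2$ end up with nonpositive coefficients, and invokes Lemma \ref{sharpwedge} (and its conjugate) to collapse the zeroth-order terms into $t^{-1}\dinn{|\bar\partial\eta|^2 v,v}[\phi]$. The only differences are the order of operations (you add the scaled identity before applying Cauchy--Schwarz, whereas the paper substitutes it afterward) and immaterial numerical constants.
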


\begin{proof}
Recall that from Proposition \ref{basicequation} \[\inn{\bar\partial(\partial\eta)^\sharp\lrcorner v, v}_\mathbb{C}e^{-\phi}+\inn{(\partial\eta)^\sharp\lrcorner \bar\partial v,v}_\mathbb{C}e^{-\phi}=-\inn{\partial\eta\wedge\partial^* v,v}_\mathbb{C}e^{-\phi}-\inn{\partial^*\partial
    \eta\wedge v,v}[\mathbb{C}]e^{-\phi}+\i\inn{[\Theta_{\eta},\Lambda] v,v}_\mathbb{C}e^{-\phi}.\]

Consequently,
\[\begin{split}
	&\int\inn{\bar{\partial}v, \bar{\partial}\eta\wedge v}[\mathbb{C}]e^{-\phi}\,dV+\int\inn{(\partial\eta)^\sharp\lrcorner v, \bar{\partial}^*_\phi v}[\mathbb{C}]e^{-\phi}\,dV-\i\int\inn{[\Theta_\eta,\Lambda]v,v}[\mathbb{C}]e^{-\phi}\,dV\\=&-\int\inn{\partial\eta\wedge v, \partial_\phi v}[\mathbb{C}]e^{-\phi}\,dV-\int\inn{\partial^*v, (\bar{\partial}\eta)^\sharp\lrcorner v}[\mathbb{C}] e^{-\phi}\,dV.\end{split}\]

This implies,
\[\begin{split}
	&\Re\int\inn{\bar{\partial}v, \bar{\partial}\eta\wedge v}[\mathbb{C}]e^{-\phi}\,dV+\Re\int\inn{\bar{\partial}^*_\phi v, (\partial\eta)^\sharp\lrcorner v}[\mathbb{C}]e^{-\phi}\,dV-\i\int\inn{[\Theta_\eta,\Lambda]v,v}[\mathbb{C}]e^{-\phi}\,dV\\\geq&-t\|\partial_\phi v\|_\phi^2-\frac{1}{2t}\|\partial\eta\wedge v\|_\phi^2-t\|\partial^*v\|_\phi^2-\frac{1}{2t}\|(\bar{\partial}\eta)^\sharp\lrcorner v\|_\phi^2\\
	=&-t\|\bar{\partial} v\|^2_\phi-t\|\bar{\partial}^*_\phi v\|^2_\phi+\i\int\inn{[\Theta_{t\phi},\Lambda]v,v}[\mathbb{C}]e^{-\phi}\,dV-\frac{1}{2t}\|\partial\eta\wedge v\|_\phi^2-\frac{1}{2t}\|(\bar{\partial}\eta)^\sharp\lrcorner v\|_\phi^2.\end{split}\]

 The last equation is due to Proposition \ref{BKNnob}.

The last inequality gives that, \[\begin{split}
    &2t\|\bar{\partial}v\|_\phi^2+2t\|\bar{\partial}^*_\phi v\|^2_\phi\\\geq&\i\int\inn{[\Theta_{\eta+t\phi},\Lambda]v,v}[\mathbb{C}]e^{-\phi}\,dV-\frac{1}{2t}\|\partial\eta\wedge v\|^2_\phi-\frac{1}{2t}\|(\bar\partial\eta)^\sharp\lrcorner v\|^2_\phi-\frac{1}{2t}\|\bar{\partial}\eta\wedge v\|^2_\phi-\frac{1}{2t}\|(\partial\eta)^\sharp\lrcorner v\|^2_\phi,
\end{split}
	\] where $\Theta_{\eta+t\phi}$ denotes the curvature form of the fiber metric $|\cdot|^2_{\eta+t\phi}=|\cdot|^2_\mathbb{C}e^{-\eta-t\phi}$.

	Recall that \[|(\partial\eta)^\sharp\lrcorner v|^2_\mathbb{C}=-|\bar{\partial}\eta\wedge v|^2_\mathbb{C}+|\bar\partial\eta|^2_\mathbb{C}|v|^2_\mathbb{C}.\] Consequently, \[\int \left(|\bar{\partial}\eta\wedge v|^2_\mathbb{C}+|(\partial\eta)^\sharp\lrcorner v|^2_\mathbb{C}\right)e^{-\phi}\,dV=\int |\bar\partial\eta|^2_\mathbb{C}|v|^2_\mathbb{C}e^{-\phi}\,dV=\||\bar{\partial}\eta|_\mathbb{C}v\|_\phi\] and thus
\[
t\|\bar{\partial}v\|_\phi^2+t\|\bar{\partial}^*_\phi v\|^2_\phi\geq\i\int\inn{[\Theta_{\eta+t\phi},\Lambda]v,v}[\mathbb{C}]e^{-\phi}\,dV-\frac{1}{t}\int \inn{|\bar\partial\eta|^2_\mathbb{C}v,v}[\mathbb{C}]e^{-\phi}\,dV.\]

Consequently, \[
\|\bar{\partial}v\|_\phi^2+\|\bar{\partial}^*_\phi v\|^2_\phi\geq\i t^{-1}\int\inn{[\Theta_{\eta+t\phi},\Lambda]v,v}[\mathbb{C}]e^{-\phi}\,dV-t^{-2}\int \inn{|\bar\partial\eta|^2_\mathbb{C}v,v}[\mathbb{C}]e^{-\phi}\,dV.\]

If there exists $t>0$ such that $\i[\Theta_{\eta+t\phi}, \Lambda]-t^{-1}|\bar{\partial}\eta|^2_\mathbb{C}$ positive definite, then the conclusion follows.

\end{proof}

\begin{remark}
	In the preceding proposition, we did not assume positivity on the curvature of $X\times\mathbb{C}$.
\end{remark}

The preceding proposition can be extend for all $v\in \Domain(\bar{\partial})\cap\Domain(\bar{\partial}^*)$ through the following density lemma.

\begin{lemma}[Theorem 2.6 in Ohsawa \cite{Oh18}]\label{densitylemma}
	The $\mathscr{C}_{p, q;_c}(X,\mathbb{C})\subset L^2_{p,q}(X, \mathbb{C})$ is dense in $\Domain(\bar{\partial})\cap\Domain(\bar{\partial}^*)$ with respect to the graph norm $\|u\|^2+\|\bar{\partial}u\|^2+\|\bar{\partial}^*u\|^2$
\end{lemma}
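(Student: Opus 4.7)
The plan is to use the classical two-step truncation-and-mollification scheme, exploiting that the K\"{a}hler metric on $X$ is complete (as assumed throughout this subsection). Given $u \in \Domain(\bar{\partial}) \cap \Domain(\bar{\partial}^*)$, I would first approximate $u$ by forms of compact support in the graph norm, and then smooth these approximations.

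The first step is to construct cutoff functions adapted to the complete metric. Fix a basepoint $x_0 \in X$ and let $\rho$ be a smooth regularization of the Riemannian distance from $x_0$, arranged so that $|d\rho|_\omega \leq 2$ everywhere; completeness enters here, since it guarantees that the sublevel sets $\{\rho \leq R\}$ are compact. Pick $\psi \in C^\infty_c(\mathbb{R})$ with $0 \leq \psi \leq 1$, $\psi \equiv 1$ on $(-\infty,1]$, and $\psi \equiv 0$ on $[2,\infty)$, and set $\chi_k(x) = \psi(\rho(x)/k)$. Then $\chi_k$ has compact support, $\chi_k \to 1$ pointwise, and $\|d\chi_k\|_{L^\infty} = O(1/k)$. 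Setting $u_k = \chi_k u$, the Leibniz rules
\[
\bar{\partial}(\chi_k u) = \chi_k\,\bar{\partial} u + \bar{\partial}\chi_k \wedge u, \qquad \bar{\partial}^*(\chi_k u) = \chi_k\,\bar{\partial}^* u - (\partial\chi_k)^\sharp \lrcorner u
\]
(the second following from Lemma \ref{calcwithmultiplier}) together with dominated convergence and $\|d\chi_k\|_\infty \to 0$ yield $u_k \to u$, $\bar{\partial} u_k \to \bar{\partial} u$, and $\bar{\partial}^* u_k \to \bar{\partial}^* u$ in $L^2$.

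For the second step I would regularize each compactly supported $u_k$ by Friedrichs mollification. Cover $\support(u_k)$ by finitely many coordinate charts, use a smooth partition of unity to write $u_k$ as a sum of pieces each supported in a single chart, and in each chart apply a standard Euclidean mollifier $J_\epsilon$. The usual fact $J_\epsilon v \to v$ in $L^2$ handles the zeroth-order convergence. The key analytical input is Friedrichs' commutator lemma: for any first-order operator $P$ with smooth coefficients on a Euclidean domain, $[P, J_\epsilon] v \to 0$ in $L^2$ whenever $v \in L^2$ has compact support. Applied to $P = \bar{\partial}$ and $P = \bar{\partial}^*$, this gives $\bar{\partial}(J_\epsilon u_k) \to \bar{\partial} u_k$ and $\bar{\partial}^*(J_\epsilon u_k) \to \bar{\partial}^* u_k$ in $L^2$. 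A diagonal argument combining the cutoff and mollifier sequences produces elements of $\mathscr{C}^\infty_{p,q;c}(X,\mathbb{C})$ converging to $u$ in the graph norm.

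The main obstacle is the first step: producing a smooth cutoff $\chi_k$ with uniformly small gradient and compact support. This is precisely where completeness of the K\"{a}hler metric is indispensable -- without it, sublevel sets of a Lipschitz exhaustion function need not be compact, and the error terms $\bar{\partial}\chi_k \wedge u$ and $(\partial\chi_k)^\sharp \lrcorner u$ cannot be forced to vanish in the limit. Once such cutoffs are in hand, the Friedrichs step is routine and introduces no genuinely new difficulty.
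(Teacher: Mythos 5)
The paper offers no proof of this lemma---it is quoted directly as Theorem 2.6 of Ohsawa \cite{Oh18}---and your argument is exactly the standard one given in that reference and in Andreotti--Vesentini/Demailly: truncation by cutoffs $\chi_k$ with $|d\chi_k|_\omega=O(1/k)$ and compact support (which exist precisely because the metric is complete), followed by Friedrichs mollification in charts. The proposal is correct, including the two points that actually need checking (that $\chi_k u\in\Domain(\bar{\partial}^*)$ with the stated Leibniz formula, and that the commutator $[P,J_\epsilon]v\to0$ in $L^2$ for first-order $P$ with smooth coefficients), so there is nothing to add.
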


From now on, we define $A_{\eta, t}=\frac{\i}{2}t^{-1}[\Theta_{\eta+t\phi}, \Lambda]-\frac{t^{-2}}{2}|\bar{\partial}\eta|^2_\mathbb{C}$. By Lemma \ref{densitylemma} and the above inequality, we obtain that \[
\|\bar{\partial}v\|_\phi^2+\|\bar{\partial}^*_\phi v\|^2_\phi\geq \dinn{A_{\eta, t}v, v}[\phi]\] holds for all $v\in\Domain(\bar{\partial})\cap\Domain(\bar{\partial}^*)$.

Suppose we have $\bar{\partial}u=f$ for a given $f\in L^2_{n, q+1}(X, E)$ such that $\bar{\partial}f=0$. We consider for arbitrary  $s\in\Domain(\bar{\partial})\cap\Domain(\bar{\partial}^*)$, that $\dinn{f, s}[\phi]\leq \dinn{A_{\eta, t}s, s}[\phi]\dinn{A^{-1}_{\eta, t}f,f}[\phi]$ by the Cauchy--Schwartz inequality. Consequently, \[\dinn{f, s}[\phi]\leq \dinn{A^{-1}_{\eta, t}f,f}[\phi]\left(\|\bar{\partial}s\|^2_\phi+\|\bar{\partial}^*_\phi s\|^2_\phi\right).\] We decompose $s=s_1+s_2$, where $s_1\in \ker(\bar{\partial})$ and $s_2\in \overline{\Range (\bar{\partial}^*)}$. Since $f\in\ker(\bar{\partial})$, we have that \[\dinn{f,s}[\phi]=\dinn{f,s_1}[\phi]\leq \dinn{A^{-1}_{\eta, t}f,f}[\phi]\|\bar{\partial}^*_\phi s_1\|^2_\phi.\] Since $s_2\in\overline{\Range(\bar{\partial}^*)}\subset\ker(\bar{\partial}^*)$. So $\dinn{f,s}[\phi]\leq \dinn{A^{-1}_{\eta, t}f,f}[\phi]\|\bar{\partial}^*_\phi s\|^2_\phi$. By the Hahn--Banach theorem, the well-defined functional on $\Range(\bar{\partial}^*)$: $\bar{\partial}^*s\mapsto \dinn{f,s}[\phi]$ extends to a bounded linear functional on $L^2_{p, q-1}(X, E)$. Consequently, there exists $u\in L^2_{p, q-1}(X, E)$ with $\|u\|_\phi\leq \dinn{A^{-1}_{\eta, t}f,f}[\phi]$ and $u$ solves the equation $\bar{\partial}u=f$ in $L^2_{p, q-1}(X, E)$. This summarizes in the following theorem.

\begin{theorem}
	Let $X\times\mathbb{C}\rightarrow X$ be a trivial line bundle with fiber metric $e^{-\phi}$. Suppose there exists $t>0$ such that $\i[\Theta_{\eta+t\phi}, \Lambda]-t^{-1}|\bar{\partial}\eta|^2_\mathbb{C}$ positive definite. For all $f\in L^2_{p, q}(X, E)$ satisfying $\dinn{A^{-1}_{\eta, t}f,f}[\phi]<\infty$, there exists a $u\in L^2_{p, q-1}(X, E)$ solves $\bar{\partial}u=f$. Moreover, we have the estimate \[\|u\|_\phi\leq \dinn{A^{-1}_{\eta, t}f,f}[\phi].\]
\end{theorem}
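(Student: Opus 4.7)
The plan is to convert the a priori estimate $\|\bar{\partial}v\|_\phi^2+\|\bar{\partial}^*_\phi v\|^2_\phi\geq \dinn{A_{\eta, t}v, v}[\phi]$, which the previous proposition establishes for $v\in\mathscr{C}^\infty_{p,q;c}(X,\mathbb{C})$, into a solvability statement for $\bar\partial u = f$ via the standard functional-analytic (Hahn--Banach) machinery. First I would promote the a priori estimate from compactly supported smooth forms to all $v\in\Domain(\bar\partial)\cap\Domain(\bar\partial^*)$ using the density lemma (Lemma \ref{densitylemma}): since both sides of the inequality are continuous with respect to the graph norm $\|v\|_\phi^2+\|\bar\partial v\|_\phi^2+\|\bar\partial^*_\phi v\|_\phi^2$, approximation by compactly supported smooth forms yields the same bound in the limit.

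With the estimate in hand on $\Domain(\bar\partial)\cap\Domain(\bar\partial^*)$, the next step is to estimate the pairing $\dinn{f,s}[\phi]$ for arbitrary $s$ in this domain. Using Cauchy--Schwarz with respect to the positive operator $A_{\eta,t}$ gives
\[
|\dinn{f,s}[\phi]|^2 \leq \dinn{A_{\eta,t}^{-1}f,f}[\phi]\cdot \dinn{A_{\eta,t}s,s}[\phi] \leq \dinn{A_{\eta,t}^{-1}f,f}[\phi]\bigl(\|\bar\partial s\|_\phi^2+\|\bar\partial^*_\phi s\|_\phi^2\bigr).
\]
To replace the right-hand side with just $\|\bar\partial^*_\phi s\|_\phi^2$, I would use the orthogonal decomposition $s=s_1+s_2$, where $s_1\in\ker(\bar\partial)$ and $s_2\in\overline{\Range(\bar\partial^*)}\subset\ker(\bar\partial^*)$. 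Since $\bar\partial f=0$ means $f\perp\overline{\Range(\bar\partial^*)}$, we get $\dinn{f,s}[\phi]=\dinn{f,s_1}[\phi]$, and $\bar\partial s_1=0$ kills the $\bar\partial$-term while $\bar\partial^*_\phi s_2=0$ allows one to write $\|\bar\partial^*_\phi s_1\|_\phi^2=\|\bar\partial^*_\phi s\|_\phi^2$. This yields the key linear estimate
\[
|\dinn{f,s}[\phi]| \leq \dinn{A_{\eta,t}^{-1}f,f}[\phi]^{1/2} \|\bar\partial^*_\phi s\|_\phi.
\]

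Finally, the functional $\bar\partial^*_\phi s\mapsto \dinn{f,s}[\phi]$ is well-defined on $\Range(\bar\partial^*_\phi)$ (well-defined precisely because the bound above vanishes whenever $\bar\partial^*_\phi s=0$) and bounded with norm at most $\dinn{A_{\eta,t}^{-1}f,f}[\phi]^{1/2}$. By Hahn--Banach, extend it to a bounded linear functional on $L^2_{p,q-1}(X,E)$, and by the Riesz representation theorem this functional is given by inner product with some $u\in L^2_{p,q-1}(X,E)$ satisfying $\|u\|_\phi\leq \dinn{A_{\eta,t}^{-1}f,f}[\phi]^{1/2}$. The identity $\dinn{u,\bar\partial^*_\phi s}[\phi]=\dinn{f,s}[\phi]$ for all $s\in\Domain(\bar\partial^*_\phi)$ is precisely the weak formulation of $\bar\partial u=f$. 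The main (essentially only) obstacle is checking that $\dinn{f,s}[\phi]$ genuinely depends only on $\bar\partial^*_\phi s$ and not on $s$ itself; this requires both $\bar\partial f=0$ and the orthogonal decomposition argument above, and it is what makes the estimate with $\|\bar\partial^*_\phi s\|_\phi^2$ (rather than $\|\bar\partial s\|_\phi^2+\|\bar\partial^*_\phi s\|_\phi^2$) legitimate as input to Hahn--Banach.
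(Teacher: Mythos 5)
Your proposal is correct and follows essentially the same route as the paper: extend the a priori estimate by the density lemma, apply Cauchy--Schwarz with respect to $A_{\eta,t}$, reduce to $\|\bar\partial^*_\phi s\|_\phi$ via the decomposition $s=s_1+s_2$ with $s_1\in\ker(\bar\partial)$ and $s_2\in\overline{\Range(\bar\partial^*)}$, and conclude by Hahn--Banach and Riesz representation. In fact your bookkeeping of exponents is the more careful one --- you correctly arrive at $\|u\|_\phi\leq \dinn{A^{-1}_{\eta,t}f,f}[\phi]^{1/2}$, whereas the paper drops the square on $|\dinn{f,s}[\phi]|$ after Cauchy--Schwarz and consequently states the final estimate without the square root.
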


\subsection{The case with boundary (incomplete metric on $X$)}

By the Bochner--Kodaira--Nakano identity, we have that, for $u\in C^\infty_{(n,q)}(\overline{\Omega})$, \[\bar\partial\bar\partial^*_\phi u+\bar\partial^*_\phi\bar\partial u=\partial_\phi\partial^*u+\i\Theta_{\phi}\Lambda u.\] The second equation is that  \[\bar\partial\bar\partial^*_{\phi+\eta} u+\bar\partial^*_{\phi+\eta}\bar\partial u=\partial_{\phi+\eta}\partial^*u+\i\Theta_{\phi+\eta}\Lambda u\] or equivalently, \[
    \bar\partial e^\eta\bar\partial^*_{\phi} e^{-\eta}u+e^\eta\bar\partial^*_{\phi}e^{-\eta}\bar\partial u=e^\eta\partial_{\phi}e^{-\eta}\partial^*u+\i\Theta_{\phi+\eta}\Lambda u\]which is,\[\bar\partial (\bar\partial^*_\phi u+(\partial\eta)^\sharp\lrcorner u)+\bar\partial^*_\phi\bar\partial u+(\partial\eta)^\sharp\lrcorner \bar\partial u=\partial_\phi\partial^*u-\partial\eta\wedge\partial^* u+\i\Theta_{\phi+\eta}\Lambda u.
\]

Consequently, taking difference of the two equations \[\bar\partial(\partial\eta)^\sharp\lrcorner u+(\partial\eta)^\sharp\lrcorner \bar\partial u=-\partial\eta\wedge\partial^* u+\i\Theta_{\eta}\Lambda u.\]

Then the basic equation:

\begin{proposition}[The basic equation for incomplete metric case]\label{basicequationincomplete}
    For $u\in C^\infty_{(n,q)}(\overline{\Omega})$, \[\inn{\bar\partial(\partial\eta)^\sharp\lrcorner u, u}_\mathbb{C}e^{-\phi}+\inn{(\partial\eta)^\sharp\lrcorner \bar\partial u,u}_\mathbb{C}e^{-\phi}=-\inn{\partial\eta\wedge\partial^* u,u}_\mathbb{C}e^{-\phi}+\i\inn{\Theta_{\eta}\Lambda u,u}_\mathbb{C}e^{-\phi}.\]
    In particular, if $u\in C^\infty_{(n,q)}(\overline{\Omega})\cap\ker(\bar\partial)$, we have that \[\inn{\bar\partial(\partial\eta)^\sharp\lrcorner u, u}_\mathbb{C}e^{-\phi}=-\inn{\partial\eta\wedge\partial^* u,u}_\mathbb{C}e^{-\phi}+\i\inn{\Theta_{\eta}\Lambda u,u}_\mathbb{C}e^{-\phi}.\]
\end{proposition}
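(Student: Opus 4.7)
The plan is to derive the stated identity as a pointwise operator equation obtained by subtracting two instances of the Bochner--Kodaira--Nakano identity, the first in the weight $\phi$ and the second in the weight $\phi + \eta$, and then pairing the result with $u$. A key structural observation is that the equality in the statement is genuinely pointwise (the $\inn{\cdot,\cdot}_\mathbb{C}$ are pointwise Hermitian inner products and $e^{-\phi}$ is a scalar multiplier), so no integration by parts is needed and no boundary contributions arise; the presence of $\partial\Omega$ plays no role in deriving the identity itself.

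Concretely, I would first write down the simplified Bochner--Kodaira--Nakano identity for $(n,q)$-forms established at the start of Section \ref{sectionof2eq},
$$\bar\partial\bar\partial^*_\phi u + \bar\partial^*_\phi \bar\partial u = \partial_\phi\partial^* u + \i\Theta_\phi\Lambda u,$$
and its analog with $\phi$ replaced by $\phi+\eta$. I would then use Lemma \ref{calcwithmultiplier}, which gives $\bar\partial^*_{\phi+\eta} v = \bar\partial^*_\phi v + (\partial\eta)^\sharp \lrcorner v$, together with the elementary identity $\partial_{\phi+\eta} v = \partial_\phi v - \partial\eta \wedge v$, to rewrite the $(\phi+\eta)$-weighted identity purely in terms of $\bar\partial^*_\phi$, $\partial_\phi$, and the multiplier operators $(\partial\eta)^\sharp\lrcorner$ and $\partial\eta\wedge$. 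Subtracting the first identity cancels the four $\phi$-only pieces $\bar\partial\bar\partial^*_\phi u$, $\bar\partial^*_\phi \bar\partial u$, $\partial_\phi\partial^* u$, and $\i\Theta_\phi\Lambda u$, leaving the pointwise operator equation
$$\bar\partial(\partial\eta)^\sharp\lrcorner u + (\partial\eta)^\sharp\lrcorner \bar\partial u = -\partial\eta\wedge\partial^* u + \i\Theta_\eta\Lambda u.$$

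Taking the pointwise Hermitian inner product of both sides with $u$ and multiplying by $e^{-\phi}$ produces the displayed identity. The second assertion is then immediate: when $u \in \ker(\bar\partial)$, the term $\inn{(\partial\eta)^\sharp\lrcorner\bar\partial u, u}_\mathbb{C}$ vanishes. I do not expect a genuine obstacle here; the only point requiring care is the bookkeeping that explains why this incomplete-metric identity has strictly fewer terms than Proposition \ref{basicequation}. Because $u$ has bidegree $(n,q)$ on a domain in $\mathbb{C}^n$, any $(1,0)$-form wedged with $u$ produces a form of bidegree $(n+1,\cdot)$ and hence vanishes; in particular $\partial_\phi u = 0$, $\partial\eta\wedge u = 0$, and $\Theta_\phi \wedge u = \Theta_\eta \wedge u = 0$. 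These vanishings are precisely what remove the $\partial^*\partial_\phi u$ and $\partial^*\partial\eta\wedge u$ contributions present in the complete-metric version, and simultaneously collapse the commutators $[\Theta_\phi,\Lambda]$ and $[\Theta_\eta,\Lambda]$ to $\Theta_\phi\Lambda$ and $\Theta_\eta\Lambda$ in the two BKN identities.
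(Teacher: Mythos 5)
Your proposal is correct and follows essentially the same route as the paper: the paper likewise writes the Bochner--Kodaira--Nakano identity for $(n,q)$-forms in the weights $\phi$ and $\phi+\eta$, rewrites the latter via $\bar\partial^*_{\phi+\eta}=\bar\partial^*_\phi+(\partial\eta)^\sharp\lrcorner$ and $\partial_{\phi+\eta}=\partial_\phi-\partial\eta\wedge$, subtracts, and pairs with $u$. Your added bookkeeping explaining why the $(n,q)$ bidegree kills $\partial^*\partial_\phi u$, $\partial^*(\partial\eta\wedge u)$, and collapses $[\Theta,\Lambda]$ to $\Theta\Lambda$ is a correct elaboration of what the paper leaves implicit.
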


Then furthermore, if $u\in\ker(\bar\partial)\cap\Domain(\bar\partial^*)\cap C^\infty_{(n,q)}(\overline{\Omega})$, \[\dinn{(\partial\eta)^\sharp\lrcorner u, \bar\partial^*_\phi u}_\phi+\dinn{\bar\partial u, \bar\partial\eta\wedge u}_\phi=-\dinn{\partial^* u,(\bar\partial\eta)^\sharp\lrcorner u}_\phi+\dinn{\i\Theta_{\eta}\Lambda u,u}_\phi.\] 

Observe that, by Morrey-Kohn-H\"{o}rmander's formula \[\begin{split}
    &\Re\dinn{\partial^* u,(\bar\partial\eta)^\sharp\lrcorner u}_\phi\\\geq&-t_3\|\partial^*u\|^2_\phi-\frac{1}{4t_3}\|(\bar\partial\eta)^\sharp\lrcorner u\|_\phi\\=&-t_3\left(\|\bar\partial u\|_\phi^2+\|\bar\partial^*_\phi u\|^2_\phi-\dinn{\i\Theta_{\phi}\Lambda u, u}_\phi-\int_{\partial\Omega}e^{-\phi}\Hessian_\delta(u,u)\,d\sigma\right)-\frac{1}{4t_3}\|(\bar\partial\eta)^\sharp\lrcorner u\|_\phi^2
\end{split}\]
Consequently, we have the following result:
\begin{theorem}
    For $u\in C^\infty_{(n,q)}(\overline{\Omega})\cap\Domain(\bar\partial^*)$, \[
    (t_1+t_3)\|\bar\partial u\|_\phi^2+(t_2+t_3)\|\bar\partial^*_\phi u\|^2_\phi\geq-\frac{1}{4t_3}\|(\bar\partial\eta)^\sharp\lrcorner u\|_\phi^2+\dinn{\i\Theta_{t_3\phi+\eta}\Lambda u,u}_\phi-\frac{1}{4t_2}\|(\partial\eta)^\sharp\lrcorner u\|^2_\phi-\frac{1}{4t_1}\|\bar\partial\eta\wedge u\|^2_\phi.
\]
In particular, if $u\in\Domain(\bar\partial^*)\cap\ker(\bar\partial)\cap C^\infty_{(n,q)}(\overline{\Omega})$, \[
    (t_2+t_3)\|\bar\partial^*_\phi u\|^2_\phi\geq-\frac{1}{4t_3}\|(\bar\partial\eta)^\sharp\lrcorner u\|_\phi^2+\dinn{\i\Theta_{t_3\phi+\eta}\Lambda u,u}_\phi-\frac{1}{4t_2}\|(\partial\eta)^\sharp\lrcorner u\|^2_\phi.\]
\end{theorem}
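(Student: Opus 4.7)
The plan is to turn the pointwise basic equation of Proposition \ref{basicequationincomplete} into a global bilinear identity on $\Omega$, trade the resulting cross-terms against $\|\bar\partial u\|_\phi^2$, $\|\bar\partial^*_\phi u\|_\phi^2$, $\|\partial^*u\|_\phi^2$ via Young's inequality, and finally eliminate $\|\partial^*u\|_\phi^2$ using the Morrey--Kohn--H\"ormander formula together with pseudoconvexity. Concretely, I would first integrate the basic equation against $e^{-\phi}\,dV$ over $\Omega$. Since $u\in\Domain(\bar\partial^*)$ and $(\partial\eta)^\sharp\lrcorner u$ is smooth up to $\partial\Omega$, the duality $\dinn{\bar\partial\alpha,u}_\phi=\dinn{\alpha,\bar\partial^*_\phi u}_\phi$ applies on the first term; using further the pointwise adjunction $\inn{\alpha,\gamma^\sharp\lrcorner\beta}[\mathbb{C}]=\inn{\bar\gamma\wedge\alpha,\beta}[\mathbb{C}]$ from the lemma in Section \ref{sectionof2eq} on the other two cross-terms (with $\eta$ real, so $\overline{\partial\eta}=\bar\partial\eta$), I arrive at
\[
\dinn{(\partial\eta)^\sharp\lrcorner u,\bar\partial^*_\phi u}_\phi+\dinn{\bar\partial u,\bar\partial\eta\wedge u}_\phi+\dinn{\partial^*u,(\bar\partial\eta)^\sharp\lrcorner u}_\phi=\dinn{\i\Theta_\eta\Lambda u,u}_\phi,
\]
whose right-hand side is real since $\Box_{E}-\bar\Box_{E}=\i\Theta_{E}\Lambda$ is a difference of self-adjoint operators in the $(n,q)$-setting of Section \ref{identityofLap}.

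Next I would take real parts and apply $\Re\dinn{a,b}_\phi\leq\epsilon\|a\|_\phi^2+\tfrac{1}{4\epsilon}\|b\|_\phi^2$ with $\epsilon=t_2,t_1,t_3$ to the three cross pairings, which yields
\[
\dinn{\i\Theta_\eta\Lambda u,u}_\phi\leq t_1\|\bar\partial u\|_\phi^2+t_2\|\bar\partial^*_\phi u\|_\phi^2+t_3\|\partial^*u\|_\phi^2+\frac{\|\bar\partial\eta\wedge u\|_\phi^2}{4t_1}+\frac{\|(\partial\eta)^\sharp\lrcorner u\|_\phi^2}{4t_2}+\frac{\|(\bar\partial\eta)^\sharp\lrcorner u\|_\phi^2}{4t_3}.
\]
I would then invoke the Morrey--Kohn--H\"ormander formula, which expresses $\|\partial^*u\|_\phi^2$ as $\|\bar\partial u\|_\phi^2+\|\bar\partial^*_\phi u\|_\phi^2-\dinn{\i\Theta_\phi\Lambda u,u}_\phi-\int_{\partial\Omega}e^{-\phi}\Hessian_\delta(u,u)\,d\sigma$. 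Because $\Omega$ is pseudoconvex and $u\in\Domain(\bar\partial^*)$, the boundary Levi-term is $\geq 0$, hence $\|\partial^*u\|_\phi^2\leq\|\bar\partial u\|_\phi^2+\|\bar\partial^*_\phi u\|_\phi^2-\dinn{\i\Theta_\phi\Lambda u,u}_\phi$. Replacing the $t_3\|\partial^*u\|_\phi^2$ on the right-hand side of the previous inequality by this upper bound only strengthens that inequality, and using $\Theta_{t_3\phi+\eta}=t_3\Theta_\phi+\Theta_\eta$, the coefficients of $\|\bar\partial u\|_\phi^2$ and $\|\bar\partial^*_\phi u\|_\phi^2$ collect to $(t_1+t_3)$ and $(t_2+t_3)$, which is exactly the first assertion.

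The second assertion is obtained by simply omitting the Young step on the pairing $\dinn{\bar\partial u,\bar\partial\eta\wedge u}_\phi$, which vanishes for $u\in\ker(\bar\partial)$, so the $\tfrac{1}{4t_1}\|\bar\partial\eta\wedge u\|_\phi^2$ term disappears and no $t_1$ appears. The main point is not computational but bookkeeping: one must verify (i) that the three adjoint conversions distribute the correct conjugates and signs so that the cross-terms line up for compatible real-part Young estimates, and (ii) that the MKH boundary Levi-term is dropped in the right direction, which is precisely where pseudoconvexity of $\Omega$ combined with $u\in\Domain(\bar\partial^*)$ is used. Both points are standard once one identifies $(n,q)$-forms with the trivial line-bundle $E$ via the simplification of Section \ref{identityofLap}.
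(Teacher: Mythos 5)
Your proposal is correct and follows essentially the same route as the paper: integrate the basic equation of Proposition \ref{basicequationincomplete} into the trilinear identity $\dinn{(\partial\eta)^\sharp\lrcorner u,\bar\partial^*_\phi u}_\phi+\dinn{\bar\partial u,\bar\partial\eta\wedge u}_\phi+\dinn{\partial^* u,(\bar\partial\eta)^\sharp\lrcorner u}_\phi=\dinn{\i\Theta_\eta\Lambda u,u}_\phi$, apply the weighted Cauchy--Schwarz/Young step with parameters $t_1,t_2,t_3$, and eliminate $\|\partial^* u\|^2_\phi$ via the Morrey--Kohn--H\"ormander formula, discarding the nonnegative boundary Hessian term by pseudoconvexity. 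The only (shared, implicit) caveat is that $t_1,t_2,t_3$ must be positive for the Young step, which the theorem's hypothesis $t_2,t_3\in\mathbb{R}$ does not literally state.
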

\begin{proof}

By Proposition \ref{basicequationincomplete} and Morrey--Kohn--H\"{o}rmander's formula,
    \[\begin{split}
    &(t_1+t_3)\|\bar\partial u\|_\phi^2+(t_2+t_3)\|\bar\partial^*_\phi u\|^2_\phi+\frac{1}{4t_2}\|(\partial\eta)^\sharp\lrcorner u\|^2_\phi+\frac{1}{4t_1}\|\bar\partial\eta\wedge u\|^2_\phi\\\geq &t_3\dinn{\i\Theta_{\phi}\Lambda u, u}_\phi-\frac{1}{4t_3}\|(\bar\partial\eta)^\sharp\lrcorner u\|_\phi^2+\dinn{\i\Theta_{\eta}\Lambda u,u}_\phi\\=&-\frac{1}{4t_3}\|(\bar\partial\eta)^\sharp\lrcorner u\|_\phi^2+\dinn{\i\Theta_{t_3\phi+\eta}\Lambda u,u}_\phi.
\end{split}\]

\end{proof}

\begin{lemma}[density lemma for bounded domain, see Chen--Shaw \cite{CS01}]
Let $\Omega \subset \mathbb{C}^n$ be a bounded pseudoconvex domain with smooth boundary. then $\mathscr{C}_{n,q}^\infty(\overline{\Omega},\mathbb{C}) \cap \Domain(\bar\partial^*)$ is dense in $\Domain(\bar\partial) \cap \Domain(\bar\partial^*)$ in the graph norm $u \mapsto(\|u\|^2+\|\bar{\partial} u\|^2+\|\bar\partial^* u\|^2)^\frac{1}{2}$.
\end{lemma}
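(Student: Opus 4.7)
The plan is to follow the classical H\"{o}rmander--Kohn approximation scheme, combining a partition of unity, an inward normal translation near the boundary, and Friedrichs mollification. Given $u \in \Domain(\bar\partial)\cap\Domain(\bar\partial^*)$, the goal is to construct a sequence $u_\nu \in \mathscr{C}_{n,q}^\infty(\overline{\Omega},\mathbb{C})\cap\Domain(\bar\partial^*)$ with $\|u_\nu - u\| + \|\bar\partial u_\nu - \bar\partial u\| + \|\bar\partial^* u_\nu - \bar\partial^* u\| \to 0$.

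First I would cover $\overline{\Omega}$ by an interior open set $U_0 \subset\subset \Omega$ together with finitely many boundary charts $U_1,\ldots,U_N$ on which $\partial\Omega$ can be straightened and a smooth inward unit normal $\nu$ is defined. Fix a smooth partition of unity $\{\chi_j\}_{j=0}^N$ subordinate to this cover and decompose $u = \sum_j \chi_j u$; since multiplication by a smooth cutoff is a zero-order operator with bounded commutators against $\bar\partial$ and $\bar\partial^*$, each $\chi_j u$ again lies in $\Domain(\bar\partial)\cap\Domain(\bar\partial^*)$. For the interior piece $\chi_0 u$, convolution with a standard mollifier $\rho_\epsilon$ produces an element of $\mathscr{C}^\infty_c(\Omega)$, which automatically lies in $\Domain(\bar\partial^*)$, and the graph-norm convergence as $\epsilon \to 0$ is standard because translation commutes with $\bar\partial$ and $\bar\partial^*$ to leading order.

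For each boundary piece $\chi_j u$, $j \geq 1$, the idea is to translate inward before mollifying. Define $u^{(j)}_\epsilon(z) := (\chi_j u)(z - \epsilon\,\nu(z))$, extended by zero outside $\Omega$. For $\epsilon > 0$ sufficiently small, $\support u^{(j)}_\epsilon$ lies at positive distance from $\partial\Omega$, so a subsequent convolution with $\rho_\delta$ for $\delta \ll \epsilon$ yields a smooth form compactly supported in $\Omega$, hence trivially a member of $\mathscr{C}_{n,q}^\infty(\overline{\Omega},\mathbb{C})\cap\Domain(\bar\partial^*)$. A diagonal argument in $(\epsilon,\delta)$ then selects the candidate approximating sequence, and $L^2$-convergence together with $\bar\partial$-convergence follow from strong continuity of translation in $L^2$ and the constancy of the coefficients of $\bar\partial$ on $\mathbb{C}^n$.

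The main obstacle is $\bar\partial^*$-convergence, since $\bar\partial^*$ has non-constant coefficients and commutes with neither translation nor convolution, and since membership in $\Domain(\bar\partial^*)$ is characterized near $\partial\Omega$ by a pointwise boundary condition on the normal components which naive mollification would destroy. The key tool is Friedrichs' lemma: for a first-order operator $P$ with smooth coefficients on a neighborhood of $\overline\Omega$, the commutator $[P,\rho_\delta *]v$ tends to $0$ in $L^2$ whenever $v,Pv \in L^2$. Applied to $P = \bar\partial^*$ together with translation continuity of both $u$ and $\bar\partial^* u$ in $L^2$, this gives $\bar\partial^*(u^{(j)}_\epsilon * \rho_\delta) \to \bar\partial^*(\chi_j u)$ in $L^2$ along a suitable diagonal. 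The inward translation is precisely what eliminates the boundary-condition issue: after shifting by $\epsilon\,\nu$, the form is compactly supported in $\Omega$, where no boundary condition is required at all. Summing over $j$ and diagonalizing yields the claimed approximation; the delicate point is the verification of Friedrichs' lemma in this geometric setting and the uniform control as $\epsilon,\delta \to 0$.
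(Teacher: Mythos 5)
The central step of your construction --- translate each boundary piece \emph{inward} by $\epsilon\nu$, extend by zero on the resulting collar, and then mollify at scale $\delta\ll\epsilon$ --- does not converge in the graph norm, and the approach cannot be repaired. The translated form $u^{(j)}_\epsilon$ is supported in $\{z\in\Omega:\distance(z,\partial\Omega)>\epsilon\}$ and has a jump across the shifted hypersurface $\partial\Omega_\epsilon$ equal to the trace of $\chi_j u$ there. Consequently the distributional $\bar\partial$ of $u^{(j)}_\epsilon$ contains a surface-measure term of the schematic form $\bar\partial\rho\wedge(\chi_j u)\,d\sigma_{\partial\Omega_\epsilon}$, and the formal adjoint applied to it contains $(\partial\rho)^\sharp\lrcorner(\chi_j u)\,d\sigma_{\partial\Omega_\epsilon}$. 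The membership $u\in\Domain(\bar\partial^*)$ controls only the \emph{normal} components (and only on $\partial\Omega$ itself); the tangential trace $\bar\partial\rho\wedge u$ is generically nonzero, so the first singular term does not vanish. Hence $u^{(j)}_\epsilon\notin\Domain(\bar\partial)$ in the distributional sense, $\|\bar\partial(u^{(j)}_\epsilon*\rho_\delta)\|_{L^2}$ grows like $\delta^{-1/2}$ as $\delta\to0$ for fixed $\epsilon$, and no diagonal choice of $(\epsilon,\delta)$ gives graph-norm convergence. Friedrichs' lemma cannot rescue this: its hypothesis is that $v$ and $Pv$ both lie in $L^2$, which is exactly what the translation-plus-zero-extension destroys before mollification ever enters. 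There is also a decisive structural objection: your scheme, if valid, would prove that compactly supported smooth forms are dense in $\Domain(\bar\partial)\cap\Domain(\bar\partial^*)$ in the graph norm. That stronger statement is false for the incomplete Euclidean metric on a bounded domain --- it would make the boundary integral $\int_{\partial\Omega}e^{-\phi}\Hessian_\delta(u,u)\,d\sigma$ in the Morrey--Kohn--H\"ormander formula of Section 2.3, and with it the pseudoconvexity hypothesis, entirely superfluous. The lemma is stated with $\mathscr{C}^\infty_{n,q}(\overline{\Omega},\mathbb{C})\cap\Domain(\bar\partial^*)$, forms smooth up to the boundary satisfying the $\bar\partial^*$ boundary condition, precisely because one cannot push the support off the boundary.

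The proof the citation points to (H\"ormander's Proposition 1.2.4; Chen--Shaw, Lemma 4.3.2) shares your partition-of-unity and mollification skeleton but handles the boundary pieces in the opposite way. In a special boundary chart one decomposes the form into components tangential and normal to $\bar\partial\rho$. The tangential components carry no boundary condition and are translated \emph{outward} --- i.e.\ one sets $g_\epsilon(z)=g(z+\epsilon\nu)$ with $\nu$ inward, so that $g_\epsilon$ is defined on a full neighborhood of $\overline{\Omega}$ intersected with the chart; mollifying at scale $\delta\ll\epsilon$ then yields forms smooth up to (not supported away from) the boundary, with no zero-extension inside $\Omega$ and hence no jump. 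The normal components do satisfy a vanishing trace condition on $\partial\Omega$, and for these one shows that extension by zero \emph{across} $\partial\Omega$ introduces no singular part in the relevant first-order derivatives, after which mollification (and Friedrichs' lemma, legitimately applied) gives convergence. Reworking your argument along these lines --- distinguishing the two types of components and reversing the direction of translation for the tangential ones --- recovers the standard proof.
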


By the above lemma, we obtain the following theorem.

\begin{theorem}\label{thm3.3}
      For an arbitrary $(n,q)$ form $u\in \Domain(\bar\partial)\cap\Domain(\bar\partial^*)$, \[
    (t_1+t_3)\|\bar\partial u\|_\phi^2+(t_2+t_3)\|\bar\partial^*_\phi u\|^2_\phi\geq-\frac{1}{4t_3}\|(\bar\partial\eta)^\sharp\lrcorner u\|_\phi^2+\dinn{\i\Theta_{t_3\phi+\eta}\Lambda u,u}_\phi-\frac{1}{4t_2}\|(\partial\eta)^\sharp\lrcorner u\|^2_\phi-\frac{1}{4t_1}\|\bar\partial\eta\wedge u\|^2_\phi.
\]
In particular, if $u\in\Domain(\bar\partial^*)\cap\ker(\bar\partial)$, \[
    (t_2+t_3)\|\bar\partial^*_\phi u\|^2_\phi\geq-\frac{1}{4t_3}\|(\bar\partial\eta)^\sharp\lrcorner u\|_\phi^2+\dinn{\i\Theta_{t_3\phi+\eta}\Lambda u,u}_\phi-\frac{1}{4t_2}\|(\partial\eta)^\sharp\lrcorner u\|^2_\phi.\]
\end{theorem}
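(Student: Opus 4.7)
The plan is to deduce Theorem \ref{thm3.3} directly from the preceding smooth-form theorem by a standard $L^2$ density argument based on the bounded-domain density lemma just recalled. Given an arbitrary $(n,q)$-form $u\in\Domain(\bar\partial)\cap\Domain(\bar\partial^*)$, I would first invoke the density lemma to produce a sequence $u_k\in\mathscr{C}^\infty_{n,q}(\overline{\Omega},\mathbb{C})\cap\Domain(\bar\partial^*)$ with $u_k\to u$, $\bar\partial u_k\to\bar\partial u$, and $\bar\partial^*_\phi u_k\to\bar\partial^*_\phi u$ in $L^2(\Omega,\phi)$, using that $\phi\in C^2(\overline{\Omega})$ on the bounded $\Omega$ makes weighted and unweighted graph norms equivalent. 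Applying the preceding theorem to each $u_k$ yields the target inequality with $u_k$ in place of $u$; it then suffices to pass to the limit.

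For the limit, the two left-hand side terms $\|\bar\partial u_k\|^2_\phi$ and $\|\bar\partial^*_\phi u_k\|^2_\phi$ converge by graph-norm convergence. On the right-hand side, the crucial observation is that, since $\eta,\phi\in C^2(\overline{\Omega})$ and $\Omega$ is bounded, the coefficients of $\partial\eta$, $\bar\partial\eta$ and of the curvature $\i\partial\bar\partial(t_3\phi+\eta)$ are uniformly bounded on $\overline{\Omega}$. Hence each of the pointwise operators $u\mapsto(\bar\partial\eta)^\sharp\lrcorner u$, $u\mapsto(\partial\eta)^\sharp\lrcorner u$, $u\mapsto\bar\partial\eta\wedge u$, and $u\mapsto\i\Theta_{t_3\phi+\eta}\Lambda u$ is a bounded multiplier on $L^2(\Omega,\phi)$, so the corresponding quadratic forms in $u$ are $L^2$-continuous. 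Taking $k\to\infty$ therefore establishes the first assertion.

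For the particular case $u\in\ker(\bar\partial)\cap\Domain(\bar\partial^*)$, the first inequality (now in its full generality) specializes, after using $\bar\partial u=0$ to drop the $(t_1+t_3)\|\bar\partial u\|^2_\phi$ term on the left, to
\[(t_2+t_3)\|\bar\partial^*_\phi u\|^2_\phi \geq -\frac{1}{4t_3}\|(\bar\partial\eta)^\sharp\lrcorner u\|_\phi^2+\dinn{\i\Theta_{t_3\phi+\eta}\Lambda u,u}_\phi-\frac{1}{4t_2}\|(\partial\eta)^\sharp\lrcorner u\|^2_\phi-\frac{1}{4t_1}\|\bar\partial\eta\wedge u\|^2_\phi.\]
Since $t_1$ no longer appears on the left, I would let $t_1\to\infty$ to make the last right-hand-side term vanish, producing the second stated inequality. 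There is no serious obstacle in this argument---the only point requiring care is the $L^2$-continuity of the right-hand side, which follows at once from the boundedness of the $C^2$ coefficients on the compact set $\overline{\Omega}$.
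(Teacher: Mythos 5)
Your proposal is correct and is essentially the paper's own argument: the paper derives Theorem \ref{thm3.3} from the smooth-form estimate precisely via the quoted density lemma, and you have merely filled in the routine details (equivalence of weighted and unweighted graph norms for $\phi\in C^2(\overline{\Omega})$, and $L^2$-continuity of the right-hand side from the boundedness of the $C^2$ coefficients on $\overline{\Omega}$). Your derivation of the second inequality by letting $t_1\to\infty$ is a harmless variant of the paper's route, where the $\tfrac{1}{4t_1}\|\bar\partial\eta\wedge u\|^2_\phi$ term never appears because the cross term it controls involves $\bar\partial u=0$.
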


\section{Reformulation as a Donnelly--Fefferman type estimate}\label{thelastsection}
Consider Theorem \ref{thm3.3}. Let $t_3=t_2=1$. From the basic equation, assuming $u\in\ker\bar\partial$, we have that, for $u\in\Domain(\bar\partial^*_{0,1;\phi})\cap\ker(\bar\partial)$ \begin{equation}\label{1}
    \begin{split}
    &2\|\bar\partial^*_\phi u\|^2_\phi\geq\dinn{\i\Theta_{\phi+\eta}\Lambda \tilde u,\tilde u}_\phi-\frac{1}{4}\|(\partial\eta)^\sharp\lrcorner \tilde u\|^2_\phi-\frac{1}{4}\|(\bar\partial\eta)^\sharp\lrcorner \tilde u\|^2_\phi\\=&\int_\Omega (\i\frac{\partial^2(\eta+\phi)}{\partial z_i \bar z_j}-\frac{1}{4}\i\frac{\partial\eta}{\partial z_i}\frac{\partial\eta}{\partial \bar z_j})u_i\bar u_j e^{-\phi}\,dV-\frac{1}{4}\int_\Omega |\bar\partial\eta|^2|u|^2\,dV.
\end{split}
\end{equation}

Consider a function $f\in\ker(\bar\partial)$. Then $\kappa: \bar\partial^*_\phi g\mapsto\dinn{f, g}_{\phi}$ is well-defined for $g\in\ker(\bar\partial)$ (see Chen--Shaw \cite{CS01} and Straube \cite{St10}). Let $u\in\Im(\bar\partial^*_\phi)$ and solve $\bar\partial u=f$. We then have that for all $g\in\ker(\bar\partial)\cap\Domain(\bar\partial^*_\phi)$, \[\dinn{u,\bar\partial^*_\phi g}_\phi=\dinn{f,g}_\phi.\] 

Denote $\i\frac{\partial^2(\eta+\phi)}{\partial z_i \bar z_j}-\i\frac{1}{4}\frac{\partial\eta}{\partial z_i}\frac{\partial\eta}{\partial \bar z_j}-\i\frac{1}{4}|\bar\partial\eta|^2$ by $\Xi$ as a pointwise action on $u$. We have that, based on (\ref{1}),  \[\|\bar\partial^*_\phi g\|^2_\phi\geq\frac{1}{2}\int_\Omega \inn{\Xi g, g}[\mathbb{C}] e^{-\phi}\,dV.\] Here, by definition \[\Xi u=(\i\frac{\partial^2(\eta+\phi)}{\partial z_i \bar z_j}-\frac{1}{4}\i\frac{\partial\eta}{\partial z_i}\frac{\partial\eta}{\partial \bar z_j})u_i\bar u_j -\frac{1}{4} |\bar\partial\eta|^2|u|^2\]

Consequently, \[\dinn{u,\bar\partial^*_\phi g}_\phi\leq\dinn{\Xi^{-1}f,f}_\phi^{1/2}\dinn{\Xi g,g}_\phi^{1/2}\leq \sqrt {2}\dinn{\Xi^{-1}f,f}_\phi^{1/2}\|\bar\partial^*_\phi g\|_\phi.\] So, $\|u\|^2_\phi\leq 2\dinn{\Xi^{-1}f,f}[\phi]$, as long as $\i\Theta_{\phi+\eta}\geq \frac{\i}{4}\partial\eta\wedge\bar\partial\eta+\frac{\i}{4}|\bar\partial\eta|^2$. In practice, since $\phi\in\PSH(\Omega)$, as long as $\eta\in\PSH(\Omega)$ with $\i\partial\bar\partial\eta\geq \frac{\i}{4}\partial\eta\wedge\bar\partial\eta+\frac{\i}{4}|\bar\partial\eta|^2$, we can obtain the estimate $\|u\|^2_\phi\leq 2\dinn{\Xi^{-1}f,f}[\phi]$.

In general, we have the following theorem:

\begin{theorem}
Let $\Omega$ be a bounded pseudoconvex domain in $\mathbb{C}^n$ and $E:=\Omega\times\mathbb{C}\rightarrow X$ be a trivial line bundle with fiber metric $e^{-\phi}$ for $\phi\in C^2(\overline{\Omega})$. Assume that there exists $\eta\in C^2(\overline{\Omega})$ and $t_2,t_3\in\mathbb{R}$ so that \[\Xi_{t_2,t_3,\eta}:=\i\frac{\partial^2(\eta+t_3\phi)}{\partial z_i \bar z_j}-\i\frac{1}{4t_2}\frac{\partial\eta}{\partial z_i}\frac{\partial\eta}{\partial \bar z_j}-\i\frac{1}{4t_3}|\bar\partial\eta|^2\geq 0.\] Let $f\in L^2_{(0,1)}(\Omega)\cap\Domain(\bar\partial)$ so that $\bar\partial f=0$, there exists $u\in\Domain(\bar\partial)$ so that \[\|u\|^2_\phi\leq (t_2+t_3)\dinn{\Xi^{-1}_{t_2,t_3,\eta}f,f}[\phi].\]

\end{theorem}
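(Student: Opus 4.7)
The plan is to combine the a priori estimate from Theorem \ref{thm3.3} with the standard duality argument via the Hahn--Banach theorem.

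First, I would specialize Theorem \ref{thm3.3} to top-degree holomorphic $(n,1)$-forms. Identify a $(0,1)$-form $u=\sum_i u_i d\bar z_i$ on the flat domain $\Omega$ with the corresponding $(n,1)$-form $\tilde u$ via the map introduced in Section \ref{identityofLap}. Because $\tilde u$ is top degree in the holomorphic variables, $\partial\eta\wedge\tilde u=0$, so the obvious analog of Lemma \ref{sharpwedge} (for $\sharp$ applied to a $(0,1)$-form, producing a $(1,0)$-vector) gives $|(\bar\partial\eta)^\sharp\lrcorner\tilde u|^2_{\mathbb{C}}=|\bar\partial\eta|^2_{\mathbb{C}}|u|^2_{\mathbb{C}}$. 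A direct computation in the flat metric also shows $|(\partial\eta)^\sharp\lrcorner\tilde u|^2_{\mathbb{C}}=\sum_{i,j}\frac{\partial\eta}{\partial z_i}\frac{\partial\eta}{\partial\bar z_j}u_i\bar u_j$ and $\inn{\i\Theta_{t_3\phi+\eta}\Lambda\tilde u,\tilde u}_{\mathbb{C}}=\sum_{i,j}\i\frac{\partial^2(\eta+t_3\phi)}{\partial z_i\bar z_j}u_i\bar u_j$. Substituting these pointwise identifications into the inequality of Theorem \ref{thm3.3} (specialized to $u\in\ker(\bar\partial)$) produces the clean a priori estimate
\[(t_2+t_3)\|\bar\partial^*_\phi u\|^2_\phi\geq\dinn{\Xi_{t_2,t_3,\eta}u,u}_\phi\qquad\text{for every } u\in\ker(\bar\partial)\cap\Domain(\bar\partial^*_\phi).\]

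Next, I would run the duality. Assume $\dinn{\Xi^{-1}_{t_2,t_3,\eta}f,f}_\phi<\infty$ (otherwise the bound is vacuous). For any $g\in\Domain(\bar\partial^*_\phi)\subset L^2_{(0,1)}(\Omega)$, orthogonally decompose $g=g_1+g_2$ with $g_1\in\ker(\bar\partial)$ and $g_2\in\ker(\bar\partial)^\perp$. For any $v\in\Domain(\bar\partial)$ one has $\bar\partial v\in\ker(\bar\partial)$ (since $\bar\partial^2=0$), so $\dinn{g_2,\bar\partial v}_\phi=0$, and consequently $g_1\in\Domain(\bar\partial^*_\phi)$ with $\bar\partial^*_\phi g_1=\bar\partial^*_\phi g$. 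Moreover, $\dinn{f,g}_\phi=\dinn{f,g_1}_\phi$ because $f\in\ker(\bar\partial)$. The Cauchy--Schwarz inequality with weight $\Xi_{t_2,t_3,\eta}$ combined with the a priori estimate then gives
\[|\dinn{f,g}_\phi|^2\leq\dinn{\Xi^{-1}_{t_2,t_3,\eta}f,f}_\phi\dinn{\Xi_{t_2,t_3,\eta}g_1,g_1}_\phi\leq(t_2+t_3)\dinn{\Xi^{-1}_{t_2,t_3,\eta}f,f}_\phi\|\bar\partial^*_\phi g\|^2_\phi.\]

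Finally, the previous estimate shows that $\kappa\colon\bar\partial^*_\phi g\mapsto\dinn{f,g}_\phi$ is a well-defined bounded linear functional on $\Range(\bar\partial^*_\phi)\subset L^2_{(0,0)}(\Omega,\phi)$ of norm at most $\bigl((t_2+t_3)\dinn{\Xi^{-1}_{t_2,t_3,\eta}f,f}_\phi\bigr)^{1/2}$. Extending $\kappa$ by Hahn--Banach and representing it by Riesz yields $u\in L^2_{(0,0)}(\Omega,\phi)$ with $\|u\|_\phi^2\leq(t_2+t_3)\dinn{\Xi^{-1}_{t_2,t_3,\eta}f,f}_\phi$ and $\dinn{u,\bar\partial^*_\phi g}_\phi=\dinn{f,g}_\phi$ for every $g\in\Domain(\bar\partial^*_\phi)$. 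By the definition of the Hilbert-space adjoint, this means $u\in\Domain(\bar\partial)$ and $\bar\partial u=f$. The main obstacle is the pointwise identification in the first step: one must verify that the various contraction and curvature norms appearing in Theorem \ref{thm3.3} collapse, in the flat $(n,1)$-setting, to exactly the coefficient expression defining $\Xi_{t_2,t_3,\eta}$; once that bookkeeping is completed, the remainder is a textbook $L^2$-duality argument.
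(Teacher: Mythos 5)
Your proposal is correct and follows essentially the same route as the paper: specialize the a priori estimate of Theorem \ref{thm3.3} to $(n,1)$-forms in $\ker(\bar\partial)\cap\Domain(\bar\partial^*_\phi)$, verify pointwise that the curvature and contraction terms collapse (in the flat metric, using that $\partial\eta\wedge\tilde u=0$ in top holomorphic degree) to the coefficient expression defining $\Xi_{t_2,t_3,\eta}$, and then run the standard Cauchy--Schwarz/Hahn--Banach duality. Your write-up is in fact slightly more careful than the paper's, which abbreviates the orthogonal decomposition and the functional-analytic extension by citing Chen--Shaw and Straube.
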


Similarly, we can extend the theorem to $(0, q)$ forms using the definition on Page 371 of Demailly \cite{De12}. Let $\tilde u$ be a $(n,q)$- form which lifts $u$. We also let $\tilde u$ be a $(0,q)$-form if $u$ is a $(n, q)$ form. In other words, we have that $\tilde{\tilde u}=u$ if $u$ is either a $(0,q)$ or a $(n,q)$ from. Define that, in the Euclidean space, \[\begin{split}
    &\Xi_{t_2,t_3,\eta}\tilde u\\=&\i\Theta_{\eta+t_3\phi}\Lambda\tilde u-\i\frac{1}{4t_2}\bar\partial\eta\wedge (\partial\eta)^\sharp \lrcorner\tilde u-\i\frac{1}{4t_3}\partial\eta\wedge(\bar\partial\eta)^\sharp\lrcorner\tilde u\\
    =&\i\Theta_{\eta+t_3\phi}\Lambda\tilde u-\i\frac{1}{4t_2}\bar\partial\eta\wedge (\partial\eta)^\sharp \lrcorner\tilde u-\i\frac{1}{4t_3}|\partial\eta|^2\tilde u,
\end{split}\] where $\Lambda=L^*$ and $L=\i \sum_i d z_i\wedge d\bar z_i$. In the case for a $(0,q)$ form $u$ , we define $\Xi_{t_2,t_3,\eta} u=\widetilde{\Xi_{t_2,t_3,\eta}\tilde u}$. Then Theorem \ref{mainthm} follows similarly to the theorem above.

		\bigskip
	\bigskip
	\noindent {\bf Acknowledgments}. The author greatly appreciates Dr. Bo-Yong Chen's patience with his questions.
	\printbibliography

\end{document}